\documentclass[11pt]{article}

\textheight=228 true mm \textwidth=170 true mm

\usepackage[T2A]{fontenc}
\usepackage[cp1251]{inputenc}
\usepackage{amsthm}%%%%%%%% theorem definition
\usepackage{amsfonts}
\usepackage{eufrak}
\usepackage{amssymb}
\usepackage{amsmath}
\usepackage{cite}%%%%% citations

\tolerance=9000 \hbadness=9000

\oddsidemargin +0.44 cm \topmargin -1 cm

\makeatletter
\makeatother
\usepackage{bm}
 \usepackage{amsthm}
\begin{document}
\renewcommand*{\proofname}{\textbf{\textit Proof}}
%\renewcommand*{\proofname}{\textbf{\upshape{\textit Proof}}}
%%%%%%%%%%%%% begin theorem definition %%%%%%%%%%%%%%%%%%
\newtheoremstyle{mytheorem}
  {\topsep}   % ABOVESPACE
  {\topsep}   % BELOWSPACE
  {\itshape}  % BODYFONT
  {}       % INDENT (empty value is the same as 0pt)
  {\bfseries} % HEADFONT
  { }         % HEADPUNCT
  {5pt plus 1pt minus 1pt} % HEADSPACE
  { }          % CUSTOM-HEAD-SPEC
\newtheoremstyle{myremark}
  {\topsep}   % ABOVESPACE
  {\topsep}   % BELOWSPACE
  {\upshape}  % BODYFONT
  {}       % INDENT (empty value is the same as 0pt)
  {\bfseries} % HEADFONT
  {  }         % HEADPUNCT
  {5pt plus 0pt minus 1pt} % HEADSPACE
  { }          % CUSTOM-HEAD-SPEC\cite{}
\theoremstyle{mytheorem}
\newtheorem{theorem}{Theorem}[section]
 \newtheorem{theorema}{Theorem}

 \newtheorem*{heyde*}{The Heyde theorem}
 \newtheorem*{a*}{Theorem A}
 \newtheorem*{b*}{Theorem B}
 \newtheorem{conjecture}[theorem]{Conjecture}

 \newtheorem{proposition}[theorem]{Proposition}
 \newtheorem{lemma}[theorem]{Lemma}
\newtheorem{corollary}[theorem]{Corollary}
\newtheorem{definition}{Definition}[section]
\theoremstyle{myremark}
\newtheorem{remark}[theorem]{Remark}
%%%%%%%%%%%%%%%%%%%%% end theorem definition %%%%%%%%%%%%%%%%%%
\noindent This article is accepted for publishing in

\noindent the  Journal of Fourier and Analysis and Applications

\vskip 1 cm

\noindent{\textbf{Heyde theorem for locally compact Abelian groups containing no subgroups  }}

\noindent{\textbf{topologically isomorphic  to the 2-dimensional torus
  }}
\bigskip

\noindent{{Gennadiy Feldman}} 
(ORCID ID  https://orcid.org/0000-0001-5163-4079)

\bigskip

\noindent\textbf{Abstract} \\ We prove the following group analogue of the 
well-known Heyde theorem on a characterization of the Gaussian distribution 
on the real line. Let $X$ be a  second countable locally compact   Abelian  group containing  no subgroups topologically isomorphic to the 2-dimensional torus.
Let $G$ be the subgroup
of $X$ generated by all elements of $X$ of order $2$  and let
$\alpha$ be a topological automorphism of the group  
$X$ such that   ${\rm Ker}(I+\alpha)=\{0\}$.
Let $\xi_1$ and $\xi_2$ be independent random
variables with  values in $X$ and distributions    $\mu_1$ and $\mu_2$ with nonvanishing characteristic functions. If the conditional
distribution of the linear form $L_2 = \xi_1 + \alpha\xi_2$ given $L_1 = \xi_1 +
\xi_2$ is symmetric, then  $\mu_j$ are convolutions of Gaussian distributions 
on $X$ and distributions supported in $G$. We also prove  that this 
theorem is false if $X$ is 
the 2-dimensional torus.

\bigskip

\noindent{\bf Keywords}
  locally compact Abelian group $\cdot$ 2-dimensional torus $\cdot$ topological automorphism $\cdot$
Gaussian distribution  $\cdot$ conditional distribution

\bigskip

\noindent{\bf Mathematics Subject Classification} Primary: 43A25 $\cdot$  43A35 $\cdot$ 60B15 $\cdot$ 62E10 

\section{Introduction}

The following characterization theorem was proved in  \cite[Theorem 3.1]{Fe?}.
\begin{a*}\label{A}
Let $X$ be a second countable locally compact    Abelian group with the connected component of zero of dimension $1$. Let $G$ be the subgroup of $X$
generated by all elements of $X$ of order $2$  and let $\alpha$ be a topological automorphism of the group    $X$ satisfying the condition 
\begin{equation}\label{d1}
{\rm Ker}(I+\alpha)=\{0\}.
\end{equation}
 Let $\xi_1$ and $\xi_2$ be independent random variables with   values in   $X$ and distributions
$\mu_1$ and $\mu_2$ with nonvanishing characteristic functions.
If the conditional distribution of the linear form    $L_2 = \xi_1 + \alpha\xi_2$ given $L_1 = \xi_1 + \xi_2$ is symmetric, then
$\mu_j$ are convolutions of Gaussian distributions 
on $X$ and distributions supported in $G$.
\end{a*}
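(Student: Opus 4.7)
The plan is to reduce the theorem to Theorem~A through a structural analysis of $X$. My first step is to rewrite the symmetry of the conditional distribution of $L_2$ given $L_1$ as a Heyde-type functional equation on the dual group $Y = \hat X$. Denoting by $\hat\alpha$ the adjoint topological automorphism of $Y$ and by $\hat\mu_j$ the characteristic functions, a standard computation with the joint characteristic function of $(L_1,L_2)$ gives
\[
\hat\mu_1(u+v)\,\hat\mu_2(u + \hat\alpha v) = \hat\mu_1(u-v)\,\hat\mu_2(u - \hat\alpha v), \quad u,v \in Y.
\]
Since the $\hat\mu_j$ do not vanish, one may work locally with $\varphi_j = -\log\hat\mu_j$ and derive the standard finite-difference identities associated with Heyde's equation.

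The second and main step is structural. By Pontryagin--van Kampen, $X$ contains an open subgroup of the form $\mathbb{R}^n \oplus K$ with $K$ compact. The hypothesis that $X$ contains no subgroup topologically isomorphic to $\mathbb{T}^2$ amounts, via duality, to the statement that no quotient of the discrete dual of the connected component of $K$ is isomorphic to $\mathbb{Z}^2$; this severely restricts the compact connected part of $X$. Combined with the condition ${\rm Ker}(I+\alpha) = \{0\}$, the plan is to extract a closed $\alpha$-invariant subgroup $H \subseteq X$ such that $X/H$ has connected component of dimension at most~$1$, while $H$ is essentially Euclidean or totally disconnected. The functional equation descends to the quotient distributions on $X/H$, where Theorem~A applies and yields the conclusion modulo $H$; on the Euclidean part of $H$ one invokes the classical Heyde theorem on $\mathbb{R}^n$, and on a totally disconnected part every Gaussian distribution is degenerate, so the conclusion there reduces to a support statement involving $G$. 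Lifting through the exact sequence and tracking the Gaussian and $G$-supported factors will yield the decomposition of $\mu_j$ on all of $X$.

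The step I expect to be most delicate is the construction of $H$ and the verification that the reduction is consistent with both the functional equation and the condition ${\rm Ker}(I+\alpha)=\{0\}$; since $\alpha$ need not respect canonical splittings provided by the LCA structure theorem, a part of the argument will have to be carried out directly at the character level on $Y$. Finally, for the counterexample on $X = \mathbb{T}^2$, my plan is to choose a hyperbolic $\alpha \in GL_2(\mathbb{Z})$ with ${\rm Ker}(I+\alpha)=\{0\}$ and to construct $\mu_1, \mu_2$ with nonvanishing characteristic functions solving the functional equation but not of the claimed form; a natural approach is to produce a non-Gaussian solution on the dual $Y = \mathbb{Z}^2$ of the associated difference equation, chosen so that the corresponding measure is not a convolution of a Gaussian with a distribution supported in $G \cong (\mathbb{Z}/2\mathbb{Z})^2$.
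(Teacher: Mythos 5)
There is a genuine gap, and it starts with a confusion about what is being proved. The statement you are asked to prove is Theorem A itself, yet your plan is to ``reduce the theorem to Theorem A''; taken literally, the argument is circular. Reading you charitably as proposing a proof of the paper's generalization (Theorem \ref{th1}, for groups containing no subgroup topologically isomorphic to $\mathbb{T}^2$) by reduction to the already-published Theorem A, the key structural step still fails. You want a closed $\alpha$-invariant subgroup $H$ that is ``essentially Euclidean or totally disconnected'' with $X/H$ having connected component of dimension at most $1$, and you want to descend the functional equation to $X/H$. But the relevant hypotheses are not stable under passage to quotients: a quotient of a group with no subgroup topologically isomorphic to $\mathbb{T}$ can contain $\mathbb{T}$. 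For example, if $D$ is the image of $\{0\}\times\Delta_{\bm a}$ in the solenoid $\Sigma_{\bm a}=(\mathbb{R}\times\Delta_{\bm a})/B$, then $D$ is closed and totally disconnected and $\Sigma_{\bm a}/D\cong\mathbb{T}$; hence $\Sigma_{\bm a}\times\Sigma_{\bm b}$ contains no $\mathbb{T}^2$ while its quotient by $D_1\times D_2$ is exactly $\mathbb{T}^2$, where the conclusion is false by Theorem \ref{th2}. This is precisely why the paper proves Lemma \ref{lem4} only for quotients by \emph{bounded} closed subgroups. Moreover, there is no reason such an $H$ can be chosen $\alpha$-invariant, and the final ``lifting through the exact sequence and tracking the Gaussian and $G$-supported factors'' is where essentially all of the difficulty of the theorem lives; it is not addressed.

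The paper's route is entirely different and deliberately so. Theorem A is obtained in one line from Theorem \ref{th1}, since a connected component of dimension $1$ excludes subgroups topologically isomorphic to $\mathbb{T}^2$; and Theorem \ref{th1} is proved directly, not by reduction to Theorem A: one first passes (Lemma \ref{lemM}) from the symmetry of the conditional distribution to the independence of $P_1=(I+\alpha)\xi_1+2\alpha\xi_2$ and $P_2=2\xi_1+(I+\alpha)\xi_2$, writes the resulting Skitovich--Darmois-type equation on $Y$ (Lemma \ref{lem13_1}), applies the finite-difference method to show that $A(y)$ satisfies $\Delta_{2k}\Delta_h^2A(y)=0$, and then uses Lemma \ref{lem1} together with a positive-definiteness argument (resting on Lemmas \ref{lem4}, \ref{lem3} and \ref{lem5}) to split $\hat\nu$ into a Gaussian factor and the characteristic function of a measure supported in $G$. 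Your opening step (the dual form of the symmetry condition, equation (\ref{03.05.18})) is a legitimate starting point and is the one the paper uses for the counterexample, but everything after it in your plan would need to be replaced.
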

Theorem A can be considered as a group analogue for two independent random variables of the well-known   theorem  of C.C.~Heyde   (\!\!\cite{He}, see also \cite[\S\,13.4.1]{KaLiRa}), where
the Gaussian distribution on the real line is characterized by the symmetry of the conditional distribution of one linear form of independent random variables given another.

Denote by  $\mathbb{R}$ the additive group of real numbers and by  $\mathbb{T}$  the circle group (the one-dimensional torus), i.e., the multiplicative group of all complex numbers with absolute value 1. The aim of the article is, firstly, to prove that Theorem A is true for a much wider class of locally compact Abelian groups, namely  for second
countable locally compact Abelian groups containing no  subgroups topologically
isomorphic to the $2$-dimensional torus $\mathbb{T}^2$ (Theorem \ref{th1}).
Secondly, to prove that Theorem A is false when $X$ is the $2$-dimensional torus
$\mathbb{T}^2$ (Theorem \ref{th2}). 
We emphasize that the proof of Theorem \ref{th1} given in the article is fundamentally different from the proof of Theorem A, which is based on the proof of the theorem for the group $X$ of the form $X=\mathbb{R}\times D$, where $D$ is a second countable totally disconnected locally compact Abelian group, and uses complex analysis.    

Many studies have been devoted to analogues of Heyde's theorem for different 
classes of locally compact Abelian groups (see e.g.
\cite{Fe2, Fe4, Fe3, Fe20bb, Fe6, F, Fe7, {Fe8a}, Fe2020, My2, My1, MyF, JFAA2021}, and also   \cite[Chapter IV]{book2023}, where one can find additional references). 
 In the article we continue  this research.

We use in the article standard facts related to abstract harmonic analysis, see e.g. \cite{Hewitt-Ross}. Let $X$ be a second
countable locally compact Abelian group,  let   ${\rm Aut}(X)$ be the  group of topological
automorphisms of the group $X$, and  let  $I$ be the identity automorphism of 
a group. Denote by $Y=X^*$ the character group of the group    $X$, and by  $(x,y)$ the value of a character $y \in Y$ at an
element $x \in X$. For a closed subgroup $K$  of the group $X$,  
denote by $A(Y, K) = \{y \in Y: (x, y) =1$ for all $x\in
K\}$ its annihilator. The character group of the factor-group $X/K$ 
is topologically isomorphic to 
the annihilator $A(Y, K)$. Let  $\alpha:X\rightarrow X$ be 
a continuous endomorphism of the group $X$.
The adjoint endomorphism $\widetilde\alpha: Y\rightarrow Y$   is defined
as follows:
    $(\alpha x,
y)=(x, \widetilde\alpha y)$ for all $x\in X$, $y\in
Y$.  Let $n$ be a natural number. Put $X^{(n)}=\{nx: x\in X\}$.
A topological isomorphism of 
  locally compact Abelian groups $X_1$ and $X_2$ is denoted as  
$X_1\cong X_2$.  Denote by $\mathbb{Z}$ the additive group of integers.  

Let   $f(y)$
be a  function on the group $Y$ and let $h\in Y$. Denote by $\Delta_h$
the finite difference operator
$$\Delta_h f(y)=f(y+h)-f(y), \quad y\in Y.$$

Denote by ${\rm M}^1(X)$ the
convolution semigroup of probability distributions on the group $X$.
Let $\mu\in{\rm M}^1(X)$. Denote by
$$
\hat\mu(y) =
\int_{X}(x, y)d \mu(x), \quad y\in Y,$$   the characteristic function (Fourier transform) of the distribution   $\mu$. The   
characteristic 
function of a signed measure is defined in the same way.   
Denote by $\sigma(\mu)$ the support of $\mu$. Define the distribution $\bar \mu \in {\rm M}^1(X)$ by the formula
 $\bar \mu(B) = \mu(-B)$ for any Borel subset $B$ in $X$.
Then $\hat{\bar{\mu}}(y)=\overline{\hat\mu(y)}$.  If $F$ is a Borel subgroup of $X$, denote by ${\rm M}^1(F)$ the subsemigroup of ${\rm M}^1(X)$  consisting of all distributions concentrated on $F$.

A distribution $\gamma \in {\rm M^1}(X)$ is
called Gaussian (\!\!\cite[Chapter IV, \S 6]{Pa}) if its characteristic
function can be represented in the form
\begin{equation}\label{01.05.1}
\hat\gamma(y)= (x,y)\exp\{-\varphi(y)\}, \quad  y\in Y,
\end{equation}
where $x \in X$ and $\varphi(y)$ is a continuous nonnegative
function on the group $Y$ satisfying  the equation
\begin{equation}\label{09.07.1}
\varphi(u + v) + \varphi(u
- v) = 2[\varphi(u) + \varphi(v)], \quad u,  v \in
Y.
\end{equation}
 A Gaussian distribution is called symmetric if  $x=0$ in   (\ref{01.05.1}). Denote by $\Gamma(X)$ the set of Gaussian distributions on the
group $X.$ Note that in particular, the degenerate  distributions are Gaussian. Let $x\in X$. Denote by $E_x$
the degenerate distribution concentrated at the point $x\in X$. 

Denote by $m_X$ a Haar measure on the group $X$. It is well known that 
$m_X$ is unique up to a positive
multiplicative constant. If $X$ is a compact group, then
$m_X(X)<\infty$. We suppose that in this case $m_X\in {\rm M^1}(X)$.
 Note that, if  
$X$ is an arbitrary locally compact Abelian group and 
 $K$ is a compact subgroup of  $X$, then
the characteristic function $\widehat m_K(y)$ is of the form
\begin{equation}\label{10.1}
\widehat m_K(y)=
\begin{cases}
1, & \text{\ if\ }\ \ y\in A(Y, K),
\\ 0, & \text{\ if\ }\ \ y\not\in A(Y, K).
\end{cases}
\end{equation} 

\section{Proof of the main theorem} 

The main result of the article is the following theorem.

\begin{theorem}  \label{th1}
  Let $X$ be a second
countable locally compact Abelian group containing no  subgroups topologically
isomorphic to the $2$-dimensional torus $\mathbb{T}^2$. Let $G$ be the subgroup of $X$
generated by all elements of $X$ of order $2$  and let $\alpha$ be a topological automorphism of the group    $X$ satisfying   condition $(\ref{d1})$. Let
  $\xi_1$ and  $\xi_2$ be independent random variables with values in the group
       $X$  and distributions $\mu_1$ and $\mu_2$ with nonvanishing characteristic functions.
   If the conditional distribution of the linear form    $L_2 = \xi_1 + \alpha\xi_2$ given $L_1 = \xi_1 + \xi_2$ is symmetric, then
$\mu_j\in \Gamma(X)*{\rm M^1}(G)$, $j=1, 2$.
\end{theorem}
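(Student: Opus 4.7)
The plan begins with translating the hypothesis into a functional equation on the dual group $Y$. A direct computation (standard for problems of this type) shows that the symmetry of the conditional distribution of $L_2$ given $L_1$ is equivalent to the Kac--Bernstein type identity
\begin{equation*}
\hat\mu_1(u+v)\,\hat\mu_2(u+\widetilde\alpha v) \;=\; \hat\mu_1(u-v)\,\hat\mu_2(u-\widetilde\alpha v), \qquad u,v \in Y.
\end{equation*}
Since both $\hat\mu_j$ are nonvanishing, one can select continuous logarithms $\psi_j=\log\hat\mu_j$ in a neighborhood of $0$ in $Y$; this converts the above multiplicative identity into an additive one and serves as the starting point for the analysis.

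Next I would bring in the structural hypothesis on $X$. By the Pontryagin structure theorem, a second countable locally compact Abelian group whose connected component contains no copy of $\mathbb{T}^2$ has connected component of the form $\mathbb{R}^n$ or $\mathbb{R}^n\times\mathbb{T}$. Dually this says that $Y$ admits no continuous surjective homomorphism onto $\mathbb{Z}^2$, a fact that will replace the one-dimensional assumption of Theorem~A and is exactly what the finite-difference calculation needs to succeed.

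The central step is the iteration of the finite-difference operator $\Delta_h$ applied to the functional equation. Fixing suitable directions $h\in Y$ and using the dual form of the hypothesis ${\rm Ker}(I+\alpha)=\{0\}$ (which gives that $(I+\widetilde\alpha)Y$ is dense in $Y$), one should obtain that certain second and third mixed differences of $\psi_j$ vanish. By the LCA analogue of the classical Kac theorem, this forces $\hat\mu_j$ to factor as the product of the characteristic function of a Gaussian distribution on $X$ and a function on $Y$ that is constant on cosets of the annihilator $A(Y,G)$; by Pontryagin duality the second factor is precisely the characteristic function of a distribution supported in $G$. This is the decomposition asserted by the theorem.

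The main obstacle is the passage from the vanishing of these differences to the desired factorization in the presence of a circle factor $\mathbb{T}$ in the connected component of $X$. In that case the difference equations on $Y$ must be interpreted modulo $A(Y,G)$, and the branch selection of logarithms of characters must be handled with care along the $\mathbb{Z}$-valued quotients. The restriction excluding a $\mathbb{T}^2$ subgroup, equivalently excluding a $\mathbb{Z}^2$ quotient of $Y$, is precisely what prevents a non-Gaussian cross-term from surviving the iteration; the counterexample of Theorem~\ref{th2} shows that this hypothesis is sharp and makes clear why the complex-analytic reduction of Theorem~A, which relied on a one-dimensional connected component, must be replaced here by a purely harmonic-analytic argument on $Y$.
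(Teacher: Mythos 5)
Your outline reproduces the general shape of a finite-difference argument, but it has several genuine gaps, one of which is an outright false claim. You assert that a second countable LCA group containing no copy of $\mathbb{T}^2$ has connected component of the form $\mathbb{R}^n$ or $\mathbb{R}^n\times\mathbb{T}$. This is false: an ${\bm a}$-adic solenoid is compact, connected, contains no subgroup topologically isomorphic to $\mathbb{T}$, and is neither $\mathbb{R}^n$ nor $\mathbb{R}^n\times\mathbb{T}$; the class of groups covered by the theorem is far larger than your structural reduction allows. The paper's actual reduction is different and essential: if $X$ contains no $\mathbb{T}^2$ but does contain a circle, that circle is unique, hence invariant under $\alpha$, and since ${\rm Aut}(\mathbb{T})=\{\pm I\}$ one gets ${\rm Ker}(I+\alpha)\ne\{0\}$ in either case --- so one may assume from the start that $X$ contains no copy of $\mathbb{T}$ at all. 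Relatedly, choosing continuous logarithms of $\hat\mu_j$ ``in a neighborhood of $0$ in $Y$'' is useless here: the finite-difference identities must hold globally on $Y$ (which is discrete when $X$ is compact). The paper symmetrizes, setting $\nu_j=\mu_j*\bar\mu_j$ so that $\hat\nu_j>0$ everywhere and $\psi_j=-\ln\hat\nu_j$ is globally defined; this in turn forces a final step you omit entirely, namely a factorization lemma showing that if $\nu=\mu*\bar\mu\in\Gamma(X)*{\rm M}^1(G)$ with $\hat\mu$ nonvanishing, then the factors, and hence $\mu_1,\mu_2$ themselves, lie in $\Gamma(X)*{\rm M}^1(G)$.

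The deeper gap is your appeal to an ``LCA analogue of the classical Kac theorem.'' No such off-the-shelf result delivers the conclusion; this is precisely where the work lies. The difference equation one actually obtains, $\Delta_{2k}\Delta_h^2A(y)=0$, only yields $A(y)=\varphi(y)+r_\iota$ on cosets of $\overline{Y^{(2)}}$, and one must then prove that $g(y)=e^{-r_\iota}=\hat\nu(y)/\hat\gamma(y)$ is positive definite. The paper does this by restricting $g$ to subgroups $H$ generated by finitely many cosets of $\overline{Y^{(2)}}$, realizing the restriction as the characteristic function of a signed measure $\varpi$ on a finite subgroup $F$ of $X/K$ all of whose nonzero elements have order $2$, and then using two nontrivial facts --- that $X/K$ still contains no circle (a lemma proved via the Baer--Fomin theorem) and that every symmetric Gaussian on such a group is the image of a Gaussian on $\mathbb{R}^n$ or $\mathbb{R}^{\aleph_0}$ under a continuous monomorphism $p$ with $p(E)\cap F=\{0\}$ --- to conclude that $\varpi$ is a genuine measure. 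Your proposal acknowledges an ``obstacle'' at exactly this point but does not resolve it, so the argument as written does not close.
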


To prove Theorem \ref{th1} we need some lemmas.
\begin{lemma}\label{lemM} {\rm (\!\!\cite[Lemma 3.8]{My1}, see also \cite[Corollary 9.7]{book2023})}    Let  $X$ be a second
countable locally compact Abelian group and let $\alpha$
be a topological automorphism of $X$.
Let  $\xi_1$ and  $\xi_2$ be independent random variables with values in
the group $X$.
   If the conditional distribution of the linear form    $L_2 = \xi_1 +
\alpha\xi_2$  given $L_1 = \xi_1 +
\xi_2$  is symmetric,   then the linear forms
$P_1=(I+\alpha)\xi_1+2\alpha\xi_2$ and
$P_2=2\xi_1+(I+\alpha)\xi_2$  are independent.
\end{lemma}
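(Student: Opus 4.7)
The plan is to translate both the symmetry hypothesis and the independence conclusion into functional equations for $\hat\mu_1$ and $\hat\mu_2$, and then derive the second from the first by well-chosen substitutions of the character variables.

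First I would unpack the hypothesis. Symmetry of the conditional distribution of $L_2$ given $L_1$ is equivalent to the equality in distribution of the pairs $(L_1, L_2)$ and $(L_1, -L_2)$; passing to joint characteristic functions and using the independence of $\xi_1$ and $\xi_2$, this becomes
\begin{equation*}
\hat\mu_1(y_1+y_2)\,\hat\mu_2(y_1+\widetilde\alpha y_2) = \hat\mu_1(y_1-y_2)\,\hat\mu_2(y_1-\widetilde\alpha y_2), \qquad y_1,y_2 \in Y.
\end{equation*}
On the other side, independence of $P_1$ and $P_2$ amounts to the factorization $E[(P_1,y_1)(P_2,y_2)] = E[(P_1,y_1)]\,E[(P_2,y_2)]$, and expanding each of these expectations by independence of $\xi_1,\xi_2$ yields explicit products of values of $\hat\mu_1$ and $\hat\mu_2$.

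The key step is the substitution $y_1 \mapsto \widetilde\alpha y_1 + y_2$, $y_2 \mapsto y_1 + y_2$ in the displayed equation above. Its left-hand side then becomes $\hat\mu_1((I+\widetilde\alpha)y_1+2y_2)\,\hat\mu_2(2\widetilde\alpha y_1+(I+\widetilde\alpha)y_2)$, which is precisely $E[(P_1,y_1)(P_2,y_2)]$, while its right-hand side collapses to $\hat\mu_1((\widetilde\alpha-I)y_1)\,\hat\mu_2((I-\widetilde\alpha)y_2)$, a product of a function of $y_1$ alone and a function of $y_2$ alone. To finish, I would identify these two factors with the marginal characteristic functions of $P_1$ and $P_2$: the two further substitutions $(y_1,y_2) = (\widetilde\alpha y, y)$ and $(y_1,y_2) = (y,y)$ in the same equation each reduce one of the four resulting factors to $\hat\mu_j(0)=1$, yielding $\hat\mu_1((\widetilde\alpha-I)y) = E[(P_1,y)]$ and $\hat\mu_2((I-\widetilde\alpha)y) = E[(P_2,y)]$, respectively.

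I do not foresee a serious obstacle in this argument. The one point worth flagging is that the lemma does not assume nonvanishing characteristic functions, so logarithms are unavailable; however, this is immaterial here because every step is just a substitution in the characteristic-function identity rather than a division or a log. Note also that the hypothesis ${\rm Ker}(I+\alpha)=\{0\}$ from condition \eqref{d1} is not used, which is consistent with the generality in which the lemma is stated.
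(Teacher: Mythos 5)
Your argument is correct and is essentially the standard proof of this lemma (the paper itself states it only by citation to \cite[Lemma 3.8]{My1} and \cite[Corollary 9.7]{book2023}, where exactly this substitution argument is used): starting from the symmetry criterion (\ref{03.05.18}) and substituting $u=\widetilde\alpha y_1+y_2$, $v=y_1+y_2$ turns the left-hand side into $\hat\mu_1((I+\widetilde\alpha)y_1+2y_2)\,\hat\mu_2(2\widetilde\alpha y_1+(I+\widetilde\alpha)y_2)$ and the right-hand side into $\hat\mu_1((\widetilde\alpha-I)y_1)\,\hat\mu_2((I-\widetilde\alpha)y_2)$, and the specializations $y_2=0$ and $y_1=0$ identify these two factors with the marginal characteristic functions of $P_1$ and $P_2$, so that equation (\ref{4}) holds and Lemma \ref{lem13_1} yields the independence. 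All the algebraic checks go through, and you are right that neither nonvanishing of the characteristic functions nor condition (\ref{d1}) is needed.
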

\begin{lemma}\label{lem13_1} {\rm (\!\!\cite[Lemma 10.1]{Fe5})}   Let $X$ be a second
countable locally compact Abelian group with character group $Y$. Let
$\alpha_j$, $\beta_j$, $j=1, 2$, be continuous endomorphisms of the group $X$. Let $\xi_1$ and $\xi_2$ be independent random variables with values in the group
       $X$  and distributions
 $\mu_1$ and $\mu_2$.
 The linear forms $L_1 = \alpha_1\xi_1 +
\alpha_2\xi_2$ and $L_2 = \beta_1\xi_1  + \beta_2\xi_2$ are independent
if and only if the characteristic functions $\hat\mu_j(y)$ satisfy the
equation
\begin{equation}\label{4}
\hat\mu_1(\widetilde\alpha_1 u+\widetilde\beta_1 v)\hat\mu_2(\widetilde\alpha_2 u+\widetilde\beta_2 v)=\hat\mu_1(\widetilde\alpha_1
u)\hat\mu_2(\widetilde\alpha_2
u)\hat\mu_1(\widetilde\beta_1 v)\hat\mu_2(\widetilde\beta_2 v),
\quad u, v \in
Y.
\end{equation}
\end{lemma}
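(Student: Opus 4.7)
The plan is to realize the pair $(L_1, L_2)$ as a single random variable with values in the product group $X \times X$ and to compute the characteristic function of its joint distribution on the dual group $Y \times Y = (X \times X)^*$. Equation $(\ref{4})$ will then emerge by comparing this joint characteristic function with the product of the two marginal characteristic functions of $L_1$ and $L_2$; the uniqueness theorem for characteristic functions on locally compact Abelian groups will supply the equivalence with independence.

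Concretely, using the pairing $((x_1, x_2), (u, v)) \mapsto (x_1, u)(x_2, v)$ between $X \times X$ and $Y \times Y$, together with the defining property $(\alpha x, y) = (x, \widetilde\alpha y)$ of the adjoint endomorphism and the independence of $\xi_1$ and $\xi_2$, I would compute
$$\widehat{\mu_{(L_1, L_2)}}(u, v) = E\bigl[(\alpha_1\xi_1 + \alpha_2\xi_2, u)(\beta_1\xi_1 + \beta_2\xi_2, v)\bigr] = E\bigl[(\xi_1, \widetilde\alpha_1 u + \widetilde\beta_1 v)(\xi_2, \widetilde\alpha_2 u + \widetilde\beta_2 v)\bigr],$$
which factors as $\hat\mu_1(\widetilde\alpha_1 u + \widetilde\beta_1 v)\,\hat\mu_2(\widetilde\alpha_2 u + \widetilde\beta_2 v)$. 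Specialising this same calculation to the single linear forms (equivalently, setting $v = 0$ and then $u = 0$), I would obtain
$$\hat\mu_{L_1}(u) = \hat\mu_1(\widetilde\alpha_1 u)\,\hat\mu_2(\widetilde\alpha_2 u), \qquad \hat\mu_{L_2}(v) = \hat\mu_1(\widetilde\beta_1 v)\,\hat\mu_2(\widetilde\beta_2 v).$$

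Finally, I would invoke the uniqueness theorem for Fourier transforms of bounded measures on the second countable locally compact Abelian group $X \times X$: the joint distribution of $(L_1, L_2)$ equals the product measure $\mu_{L_1} \otimes \mu_{L_2}$ if and only if $\widehat{\mu_{(L_1, L_2)}}(u, v) = \hat\mu_{L_1}(u)\,\hat\mu_{L_2}(v)$ for all $(u, v) \in Y \times Y$. Since independence of $L_1$ and $L_2$ is by definition the equality of these two measures on $X \times X$, substituting the formulas from the previous step yields precisely equation $(\ref{4})$, and both directions of the lemma follow simultaneously. No genuine obstacle is anticipated; the argument is essentially bookkeeping in Pontryagin duality, and the only substantive analytic input is the classical uniqueness theorem for characteristic functions on second countable locally compact Abelian groups.
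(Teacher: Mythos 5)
Your argument is correct and is essentially the standard proof of this lemma (the paper itself only cites it from \cite[Lemma 10.1]{Fe5}, where it is established by exactly this route): compute the joint characteristic function of $(L_1,L_2)$ on $X\times X$ via the adjoint endomorphisms and the independence of $\xi_1,\xi_2$, identify the marginals by setting $v=0$ and $u=0$, and apply the uniqueness theorem for Fourier transforms of probability measures on the second countable locally compact Abelian group $X\times X$ to translate independence into the factorization identity $(\ref{4})$. No gaps; the bookkeeping with the pairing $((x_1,x_2),(u,v))\mapsto(x_1,u)(x_2,v)$ and the identity $(x,y_1)(x,y_2)=(x,y_1+y_2)$ is exactly what is needed.
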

\begin{lemma}\label{lem1}{\rm (\!\!\cite[Lemma 6]{Fe20bb}, see also \cite[Lemma 5.1]{book2023})}   
Let
$Y$ be a locally compact Abelian group  and  let 
$A(y)$ be a continuous function on the group $Y$ satisfying the equation
$$\Delta_{2k}\Delta^2_{h} A(y) = 0, \quad y, k, h  \in Y,
$$
and the conditions $A(-y)=A(y)$, $A(0)=0$. Let 
\begin{equation}\label{09.07.3}
Y = \bigcup_\iota {(y_\iota +
\overline{Y^{(2)}})}, \quad y_0=0,
\end{equation}
be  a $\overline{Y^{(2)}}$-coset decomposition of the group $Y$.
Then the function
$A(y)$ can be represented  in the form
\begin{equation}\label{09.07.2}
A(y) = \varphi(y) +
r_\iota, \quad y \in y_\iota + \overline{Y^{(2)}},
\end{equation}
where $\varphi(y)$ is a continuous function on the group    $Y$ satisfying equation $(\ref{09.07.1})$.
\end{lemma}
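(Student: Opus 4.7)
The plan is to follow the standard three-step reduction scheme for Heyde-type theorems on locally compact Abelian groups, exploiting Lemmas \ref{lemM}, \ref{lem13_1}, and \ref{lem1} in sequence. First, I would apply Lemma \ref{lemM} to replace the symmetry of the conditional distribution of $L_2$ given $L_1$ by the independence of the linear forms $P_1 = (I+\alpha)\xi_1 + 2\alpha\xi_2$ and $P_2 = 2\xi_1 + (I+\alpha)\xi_2$. Then Lemma \ref{lem13_1}, applied with $\alpha_1 = I+\alpha$, $\alpha_2 = 2\alpha$, $\beta_1 = 2I$, $\beta_2 = I+\alpha$, converts this independence into the multiplicative functional equation
\begin{equation*}
\hat\mu_1((I+\widetilde\alpha)u + 2v)\,\hat\mu_2(2\widetilde\alpha u + (I+\widetilde\alpha)v) = \hat\mu_1((I+\widetilde\alpha)u)\,\hat\mu_2(2\widetilde\alpha u)\,\hat\mu_1(2v)\,\hat\mu_2((I+\widetilde\alpha)v)
\end{equation*}
valid for all $u, v \in Y$.

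To pass to an additive equation I would symmetrize: set $\nu_j = \mu_j * \bar\mu_j$, so that $\hat\nu_j = |\hat\mu_j|^2$ is strictly positive on $Y$. The symmetrized random variables still satisfy the conditional-symmetry hypothesis, so Lemmas \ref{lemM} and \ref{lem13_1} still apply and $\hat\nu_j$ satisfies the same equation. Taking $\psi_j(y) = -\log\hat\nu_j(y)$, a globally continuous even function on $Y$ vanishing at $0$, gives
\begin{equation*}
\psi_1((I+\widetilde\alpha)u + 2v) + \psi_2(2\widetilde\alpha u + (I+\widetilde\alpha)v) = \psi_1((I+\widetilde\alpha)u) + \psi_2(2\widetilde\alpha u) + \psi_1(2v) + \psi_2((I+\widetilde\alpha)v).
\end{equation*}
Applying $\Delta_h$ in $v$ and then $\Delta_k$ in $u$ kills the one-variable terms, yielding
\begin{equation*}
\Delta_{(I+\widetilde\alpha)k}\Delta_{2h}\psi_1((I+\widetilde\alpha)u + 2v) = -\,\Delta_{2\widetilde\alpha k}\Delta_{(I+\widetilde\alpha)h}\psi_2(2\widetilde\alpha u + (I+\widetilde\alpha)v).
\end{equation*}

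The core technical task is then to extract from this mixed equation a single-variable difference equation of the form $\Delta_{2k}\Delta_h^2 \psi_j(y) = 0$ on $Y$ for each $j$. The strategy is to vary $(u,v)$ along directions that keep one of the linear combinations $(I+\widetilde\alpha)u + 2v$ or $2\widetilde\alpha u + (I+\widetilde\alpha)v$ constant while the other varies, thus decoupling $\psi_1$ from $\psi_2$. Once such an equation is obtained, Lemma \ref{lem1} produces the representation $\psi_j(y) = \varphi_j(y) + r_\iota^{(j)}$ on the $\overline{Y^{(2)}}$-cosets, with $\varphi_j$ continuous and satisfying the parallelogram identity (\ref{09.07.1}). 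To recover $\mu_j$ itself one interprets $e^{-\varphi_j(y)/2}$ as the characteristic function of a symmetric Gaussian on $X$, and the step function determined by the constants $r_\iota^{(j)}$ as the characteristic function of a probability measure supported on $G$ via the duality identity $A(Y,G) = \overline{Y^{(2)}}$; the original multiplicative equation then pins down the phase factors and yields $\mu_j \in \Gamma(X) * {\rm M}^1(G)$.

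The main obstacle is the decoupling step, because in general $Y$ is not $2$-divisible and the substitutions that trivially work on $\mathbb{R}^n$ are unavailable. The no-$\mathbb{T}^2$ assumption must enter here decisively, imposing via Pontryagin duality the structural constraints on $Y$ needed to carry out the decoupling globally. Theorem \ref{th2}, which produces an explicit counterexample on $\mathbb{T}^2$, shows that this hypothesis is not merely technical but essential — any proof strategy must use it precisely at this step.
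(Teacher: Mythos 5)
Your proposal does not prove the statement it was supposed to prove. Lemma \ref{lem1} is a self-contained assertion about a functional equation: given a continuous even function $A$ on a locally compact Abelian group $Y$ with $A(0)=0$ satisfying $\Delta_{2k}\Delta^2_{h} A(y) = 0$ for all $y,k,h\in Y$, one must show that $A(y)=\varphi(y)+r_\iota$ on each coset $y_\iota+\overline{Y^{(2)}}$, where $\varphi$ is continuous and satisfies the parallelogram equation (\ref{09.07.1}). What you have written is instead an outline of the proof of Theorem \ref{th1}, in which Lemma \ref{lem1} is invoked as a black box (``Once such an equation is obtained, Lemma \ref{lem1} produces the representation\dots''). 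Nothing in your text engages with the content of the lemma itself: there is no construction of $\varphi$, no argument that $\Delta^2_h A$ is invariant under translations by $Y^{(2)}$ (and, by continuity, by $\overline{Y^{(2)}}$), no verification that the candidate $\varphi$ satisfies (\ref{09.07.1}), and no explanation of why the remainder $A-\varphi$ is constant on $\overline{Y^{(2)}}$-cosets. That is the entire mathematical substance of the lemma, and it is missing. (For the record, the paper does not reprove the lemma either; it imports it from \cite[Lemma 6]{Fe20bb}, so any acceptable answer had to supply that argument, not the surrounding theorem.)

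Even read as a sketch of Theorem \ref{th1}, the proposal mislocates the hypotheses. The decoupling of $\psi_1$ from $\psi_2$ is done by three successive finite-difference substitutions and requires no structural assumption on $Y$; what it needs is condition (\ref{d1}), which forces $(I+\widetilde\alpha)(Y)$ to be dense in $Y$, so that the shifts $(I+\widetilde\alpha)h_1$, $2h_2$, $h$ generate the full equation $\Delta_{2k}\Delta^2_h A(y)=0$. The hypothesis that $X$ contains no subgroup topologically isomorphic to $\mathbb{T}^2$ enters only later, when one must show that the $\overline{Y^{(2)}}$-invariant factor $g(y)=\hat\nu(y)/\hat\gamma(y)$ is positive definite; that step uses Lemmas \ref{lem4} and \ref{lem3}. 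So your claim that the no-$\mathbb{T}^2$ assumption ``must enter decisively'' at the decoupling step is incorrect, and in any case none of this bears on Lemma \ref{lem1}, whose hypotheses make no reference to $\mathbb{T}^2$ at all.
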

Let us recall some definitions. Let $X$ be an Abelian group. We do not assume that $X$ is a topological group. If each element of $X$ has finite order, then we say that $X$ is a torsion group. We say that $X$ is a torsion-free group if     
each element of $X$, except zero, has infinite order. The subgroup consisting 
of all elements of finite order of the group $X$ is called a torsion part of $X$.

We formulate as a lemma the following corollary of the well-known Baer--Fomin theorem (\!\!\cite[Theorem 100.1]{Fu2}).
\begin{lemma}\label{nlem4} Let $Y$ be an
   Abelian group and let $H$ be a torsion part of $Y$. If $H$ is a bounded subgroup, 
   then $H$ is a direct factor of $Y$.  
\end{lemma}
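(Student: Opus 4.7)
The plan is to deduce the lemma directly from the Baer--Fomin theorem (Fuchs, Theorem 100.1), which asserts that every bounded pure subgroup of an abelian group is a direct summand. Since boundedness of $H$ is given by hypothesis, the only content of the proof is the verification that $H$ is a \emph{pure} subgroup of $Y$, that is, $nY \cap H = nH$ for every positive integer $n$.

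First I would fix an integer $m \geq 1$ with $mH = \{0\}$, whose existence is guaranteed by the assumption that $H$ is bounded. The inclusion $nH \subseteq nY \cap H$ is immediate. For the reverse inclusion, take any $h \in nY \cap H$ and write $h = ny$ for some $y \in Y$. Since $h \in H$, I have $mh = 0$, hence $(mn)y = m h = 0$, so $y$ has finite order in $Y$. Because $H$ is the torsion part of $Y$, this forces $y \in H$, and therefore $h = ny \in nH$. Thus $H$ is pure in $Y$.

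Second, I would apply the Baer--Fomin theorem to the pure bounded subgroup $H$ of $Y$ to produce a subgroup $K \subseteq Y$ with $Y = H \oplus K$, which is precisely the assertion that $H$ is a direct factor of $Y$.

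There is no genuine obstacle: the lemma is essentially a one-line corollary of a standard theorem from the structure theory of abelian groups, and the only substantive step beyond the citation is the short observation that any element of $Y$ whose multiple falls in the bounded torsion subgroup $H$ must itself be of finite order, which is what upgrades "subgroup" to "pure subgroup" in the hypothesis of Baer--Fomin.
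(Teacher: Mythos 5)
Your argument is correct, but it rests on a different standard theorem than the one the paper invokes, and you have mislabelled it. The statement ``every bounded pure subgroup of an abelian group is a direct summand'' is Kulikov's theorem (Fuchs, Vol.~I, Theorem~27.5), not the Baer--Fomin theorem. The Baer--Fomin theorem cited in the paper (Fuchs, Vol.~II, Theorem~100.1) says that the torsion part $T$ of an abelian group is a direct summand whenever $T$ is the direct sum of a divisible group and a bounded group; since a bounded $H$ is trivially of this form (trivial divisible part), the lemma follows from it with no further verification, which is why the paper states the lemma as an immediate corollary and supplies no proof. Your route instead requires the extra step of checking that $H$ is pure in $Y$, and your verification of purity is correct --- indeed your computation shows the slightly stronger standard fact that the torsion part of any abelian group is always pure, with boundedness not actually needed for that step (from $h=ny$ and $mh=0$ one gets $mny=0$, so $y$ is torsion and lies in $H$; here $m$ can be taken to be the order of $h$). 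What your approach buys is independence from the mixed-group splitting theory of Fuchs Vol.~II: Kulikov's theorem is more elementary and appears much earlier in the development. What the paper's route buys is brevity --- the hypothesis of Theorem~100.1 is matched verbatim. Either way the lemma stands; you should only correct the attribution of the theorem you are actually using.
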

Using Lemma \ref{nlem4} we shall prove the following statement.
\begin{lemma}\label{lem4} Let $X$ be a
  locally compact Abelian group containing no  subgroups topologically
isomorphic to the circle group $\mathbb{T}$. Let   $K$ be a closed bounded subgroup of $X$. Then the factor-group $X/K$ also contains no  subgroups topologically
isomorphic to the circle group $\mathbb{T}$.
\end{lemma}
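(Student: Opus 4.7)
The plan is to argue by contradiction. Suppose $X/K$ contains a closed subgroup topologically isomorphic to $\mathbb{T}$; denoting by $\pi : X \to X/K$ the quotient map, let $L$ be the preimage of this subgroup. Then $L$ is a closed subgroup of $X$ with $K \subseteq L$ and $L/K \cong \mathbb{T}$, so it suffices to exhibit a subgroup of $L$ topologically isomorphic to $\mathbb{T}$.

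Because $K$ is bounded, there exists a positive integer $n$ with $nK = \{0\}$, and hence the continuous endomorphism $x \mapsto nx$ of $L$ vanishes on $K$. By the universal property of the quotient $L \to L/K$, it descends to a continuous homomorphism $\psi : L/K \to L$ with $\psi(x + K) = nx$, whose image equals $nL$ and whose kernel is the closed subgroup $L[n]/K$ of $L/K \cong \mathbb{T}$, where $L[n] = \{x \in L : nx = 0\}$.

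Every closed subgroup of $\mathbb{T}$ is either $\mathbb{T}$ itself, a finite cyclic group, or trivial. If $\ker \psi = L/K$, then $nL = \{0\}$, so $L/K$ has exponent $n$, which is incompatible with $L/K \cong \mathbb{T}$. In the remaining cases $\ker\psi$ is finite, so the continuous bijective homomorphism $(L/K)/\ker\psi \to nL$ induced by $\psi$ is a homeomorphism, its source being compact and its target Hausdorff. Since $\mathbb{T}/F$ is topologically isomorphic to $\mathbb{T}$ for any finite subgroup $F \subseteq \mathbb{T}$, we conclude that $nL$ is a subgroup of $X$ topologically isomorphic to $\mathbb{T}$, contradicting the hypothesis on $X$.

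The main subtlety is upgrading the algebraic factorization of the multiplication-by-$n$ map to a \emph{topological} isomorphism $nL \cong \mathbb{T}$, and this is settled by compactness of $L/K$ via the closed map lemma. I note that this route bypasses Lemma \ref{nlem4} (Baer–Fomin), which the paragraph preceding Lemma \ref{lem4} announces as a tool: the argument above instead relies on the classification of closed subgroups of $\mathbb{T}$ combined with the fact that $n(L/K) = L/K$ for divisible $L/K$.
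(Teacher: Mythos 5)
Your proof is correct, and it takes a genuinely different route from the paper's. The paper performs the same initial reduction (pass to the preimage $S=p^{-1}(F)$ so that one may assume $X/K\cong\mathbb{T}$), but from there it invokes the structure theorem to reduce to a group with a compact open subgroup, then to a compact group, passes to the discrete dual $Y$, and uses the Baer--Fomin theorem (Lemma~\ref{nlem4}) to split the bounded torsion part off $Y$ and extract a direct summand isomorphic to $\mathbb{Z}$, which dualizes back to a copy of $\mathbb{T}$ inside $X$. You stay entirely on the group side: since $nK=\{0\}$, multiplication by $n$ on $L$ is constant on $K$-cosets and so factors through the open quotient map $L\to L/K\cong\mathbb{T}$, giving a continuous homomorphism $\psi:\mathbb{T}\to X$; its kernel is a proper closed subgroup of $\mathbb{T}$ (properness because $\mathbb{T}$ is divisible, so $nL+K=L$ and $\psi$ cannot be trivial), hence finite, and compactness of the source upgrades the induced continuous bijection $(L/K)/\ker\psi\to nL$ to a topological isomorphism, so $nL\cong\mathbb{T}/F\cong\mathbb{T}$ sits inside $X$. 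Your argument is shorter and more elementary --- it needs neither the structure theorem nor duality nor Baer--Fomin, only the classification of closed subgroups of $\mathbb{T}$ and the closed map lemma --- at the cost of leaning on special features of $\mathbb{T}$ (divisibility, finiteness of proper closed subgroups, $\mathbb{T}/F\cong\mathbb{T}$); the paper's dual route is heavier but reuses machinery already deployed elsewhere in the article. The only step you leave tacit is the identification of the quotient topology on $L/K$ with the subspace topology on the image subgroup of $X/K$, which holds because the natural homomorphism is open; the paper makes the same tacit identification.
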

\begin{proof}  The lemma is true if $K=X$, so we suppose that $K\ne X$. Assume that the factor-group $X/K$   contains a subgroup $F$ such that $F\cong \mathbb{T}$. Let us prove that in this case we arrive at a contradiction.
Let $p:X \rightarrow X/K$ be the natural homomorphism.  Put $S=p^{-1}(F)$. 
The group $S$ is closed and hence locally compact.
Consider the restriction of $p$ to the subgroup $S$. We have $p:S \rightarrow S/K$ and $S/K\cong\mathbb{T}$. Thus, we can  prove the lemma assuming that  $X/K\cong \mathbb{T}$.
By the structure theorem for 
locally compact Abelian groups,  the group  $X$ is topologically isomorphic to a group of the form $\mathbb{R}^m\times G$, where a locally compact Abelian group $G$ contains a compact open subgroup. Since $K$ is a  bounded subgroup, $K\subset G$. It follows from $X/K\cong \mathbb{T}$ that $m=0$, i.e., the group $X$ itself contains a compact open subgroup. Denote by $Y$ the character group of the group $X$. 
  
  First suppose that $X$ is a compact group. Then $Y$ is a discrete group. Denote by $H$ the torsion part 
  of the group $Y$.
We have  $A(Y, K)\cong (X/K)^*$. Since $X/K\cong \mathbb{T}$, this implies that 
\begin{equation}\label{new1}
A(Y, K)\cong \mathbb{Z}.
\end{equation}
   Inasmuch as  $K$ is a bounded group, there is a natural $n$ 
   such that $nx=0$ for all $x\in K$. It follows from this that $ny\in A(Y, K)$ for each $y\in Y$, i.e., 
\begin{equation}\label{new4}
Y^{(n)}\subset A(Y, K).
\end{equation}   
  In particular,  
\begin{equation}\label{new2}
H^{(n)}\subset A(Y, K).
\end{equation}   
In view of   
(\ref{new1}), (\ref{new2}) implies that
\begin{equation}\label{new3}
H^{(n)}=\{0\}.
\end{equation}
Hence 
 $H$ is a bounded group. By Lemma \ref{nlem4},
 $H$ is a direct factor of  $Y$. We have $Y= H\times L$, where $L$ is a torsion-free group. Taking into account (\ref{new3}), this implies that  $Y^{(n)}= L^{(n)}$. For this reason, it follows from (\ref{new1}) and (\ref{new4}) that 
 $L\cong \mathbb{Z}$. This implies that the group $X$ contains a subgroup topologically isomorphic to the circle group $\mathbb{T}$, contrary to the assumption. Thus, the lemma is proved if the group $X$ is compact.

Consider the general case. Let $B$ be a compact open subgroup of the group $X$.  As far as  $p$ is a continuous open epimorphism, $p(B)$ is an open subgroup of the factor-group $X/K$. Since $X/K\cong \mathbb{T},$ we have $p(B)=X/K.$ Moreover, $p(B)\cong B/(K\cap B)$. Thus, $B/(K\cap B)\cong \mathbb{T}$. As has been shown above, this implies that the group $B$, and hence the group $X$, contains a subgroup topologically isomorphic to the circle group  $\mathbb{T}$, contrary to the assumption.     \end{proof}

Let $\mathbb{R}^{\aleph_0}$ be the space of all sequences of real numbers 
 considering in the product topology. We will need the definition of the Gaussian distribution in $\mathbb{R}^{\aleph_0}$. The space $\mathbb{R}^{\aleph_0}$
 is one of the simplest examples of a locally convex space. We note that 
 Gaussian distributions
in arbitrary locally convex spaces are studied in details in the 
fundamental monograph \cite{B}. 

  Denote by $\mathbb{R}^{\aleph_0*}$ the space of all finitary 
sequences of real numbers with the topology of  strictly inductive 
limit of spaces $\mathbb{R}^{n}$. Let ${t}=(t_1,t_2, \dots, t_n, \dots)\in
\mathbb{R}^{\aleph_0}$ and ${s}=(s_1, s_2, \dots, s_n, 0, \dots)\in
\mathbb{R}^{\aleph_0*}$. Set
$$ 
\langle{t},{s}\rangle=\mathop{\sum}\limits_{j=1}^\infty t_js_j 
.$$ 
Let $\mu$ be a
distribution on $\mathbb{R}^{\aleph_0}$. We define the
characteristic function of  $\mu$ by the formula
$$
\hat\mu({s})=\int\limits_{\mathbb{R}^{\aleph_0}}
\exp\{i\langle{t},{s}\rangle\}d\mu({t}), 
\quad {s}\in \mathbb{R}^{\aleph_0*}.
$$
Let  $A=(\alpha_{ij})_{i, j=1}^\infty$ be a symmetric positive
semidefinite matrix, i.e.,
the quadratic form
$$\langle As, {s}\rangle=\mathop{\sum}\limits_{i, j=1}^\infty 
\alpha_{ij}s_is_j$$
is nonnegative for all   ${s}=(s_1,s_2, \dots, s_n, 0, \dots)\in
\mathbb{R}^{\aleph_0*}$.

A distribution $\mu$ on the group $\mathbb{R}^{\aleph_0}$ is
called 	Gaussian  if its characteristic function
is represented in the form 	$$
\hat\mu({s})=\exp\{i\langle{t},{s}\rangle-\langle A {s},
	{s}\rangle\}, \quad {s}\in \mathbb{R}^{\aleph_0*},
$$
	where ${t}\in \mathbb{R}^{\aleph_0}$  and $A=(\alpha_{ij})_{i,
		j=1}^\infty$ is a symmetric positive
	semidefinite matrix.

\begin{lemma}\label{lem3} {\rm (\!\!\cite{Fe1}, see also \cite[\S 3]{Fe5})}  Let $X$ be a second countable
  locally compact Abelian group containing no  subgroups topologically
isomorphic to the circle group $\mathbb{T}.$ Then there exists a 
continuous monomorphism $p:E\rightarrow X$,
 where either  $E=\mathbb{R}^{n}$ for some $n$ or
$E=\mathbb{R}^{\aleph_0}$, such that if $\gamma$ is a symmetric
Gaussian distribution on  $X$, then $\gamma=p(M)$, where $M$ is   a symmetric Gaussian distribution  on $E$.
\end{lemma}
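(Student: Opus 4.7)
Any continuous nonnegative solution $\varphi$ of equation (\ref{09.07.1}) on $Y$ satisfies $\varphi(ny) = n^{2}\varphi(y)$, hence vanishes identically on every compact subgroup of $Y$ (being bounded there by continuity). For $y \in A(Y, X_c) = \widehat{X/X_c}$, where $X_c$ is the connected component of zero, the totally disconnected group $X/X_c$ has a compact open subgroup $B$, and the image of $y$ in the discrete torsion quotient $\widehat B$ of $\widehat{X/X_c}$ has finite order $n$; hence $n y$ lies in the compact annihilator $A(\widehat{X/X_c}, B)$ and $\overline{\langle y\rangle}$ is compact. It follows that $\varphi \equiv 0$ on $A(Y, X_c)$, so $\hat\gamma \equiv 1$ there, and every symmetric Gaussian $\gamma \in \Gamma(X)$ is supported in $X_c$. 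By the structure theorem, $X_c \cong \mathbb{R}^m \times C$ for some integer $m \geq 0$ and some compact connected Abelian group $C$ that, by hypothesis, contains no subgroup isomorphic to $\mathbb{T}$. Pontryagin duality then shows that $D := \widehat C$ is a countable (by second countability of $X$) torsion-free discrete Abelian group admitting no nonzero homomorphism to $\mathbb{Z}$.

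\textbf{Construction of $p$.} Let $V := D \otimes \mathbb{Q}$ be the divisible hull of $D$, a $\mathbb{Q}$-vector space of some countable dimension $\kappa \in \{0, 1, 2, \ldots\} \cup \{\aleph_0\}$. Fix a $\mathbb{Q}$-basis of $V$ and embed $V$ as a $\mathbb{Q}$-subspace of $\mathbb{R}^{\kappa}$ (using the finitary-sequence space $\mathbb{R}^{\aleph_0*} = \widehat{\mathbb{R}^{\aleph_0}}$ when $\kappa = \aleph_0$) by sending the chosen basis to the standard basis. The essential verification is that the subgroup $D \subseteq V$ has dense image in the ambient real space, which uses the hypothesis that $D$ has no $\mathbb{Z}$ quotient. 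Pontryagin duality then produces a continuous monomorphism $q: \mathbb{R}^{\kappa} \to C$ (respectively $q: \mathbb{R}^{\aleph_0} \to C$), since injectivity of $q$ is equivalent to density of the adjoint image. Setting $E := \mathbb{R}^m \times \mathbb{R}^{\kappa}$ (which is isomorphic to $\mathbb{R}^{m+\kappa}$ when $\kappa$ is finite and to $\mathbb{R}^{\aleph_0}$ otherwise) and $p := (\mathrm{id}_{\mathbb{R}^m} \times q)$ composed with the inclusion $X_c \hookrightarrow X$ yields the required continuous monomorphism.

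\textbf{Gaussian lifting.} Given $\gamma \in \Gamma(X)$ symmetric with $\hat\gamma(y) = \exp(-\varphi(y))$, the reduction above shows $\varphi$ descends to a continuous nonnegative quadratic form on $\widehat{X_c} = \mathbb{R}^m \oplus D$ solving (\ref{09.07.1}). This form extends, via $\mathbb{R}$-linear extension of its bilinearization along the basis embedding $V \hookrightarrow \mathbb{R}^{\kappa}$ (combined with the identity on $\mathbb{R}^m$), to a continuous nonnegative quadratic form $\psi$ on $\widehat E$ given by a positive semidefinite matrix. The symmetric Gaussian $M \in \Gamma(E)$ with characteristic function $\exp(-\psi)$ then satisfies $\widehat{p(M)}(y) = \hat M(\widetilde p(y)) = \exp(-\psi(\widetilde p(y))) = \exp(-\varphi(y)) = \hat\gamma(y)$, so $\gamma = p(M)$ as required.

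\textbf{Main obstacle.} The crux is to arrange the embedding $V \hookrightarrow \mathbb{R}^{\kappa}$ so that the subgroup $D \subseteq V$ itself lands densely in the ambient space: density of the image of $V$ is automatic from any $\mathbb{Q}$-basis choice, but $D$ sits inside $V$ only as a potentially sparse subgroup. Here the hypothesis on $X$ enters essentially --- if $D$ had a direct summand isomorphic to $\mathbb{Z}$, its image would contain a discrete lattice-like piece obstructing density, and dually this would produce a $\mathbb{T}$ direct factor of $C$ and hence a subgroup of $X$ topologically isomorphic to $\mathbb{T}$, contradicting the standing assumption. Translating ``no nonzero homomorphism $D \to \mathbb{Z}$'' into the required topological density (especially in the strict inductive limit topology of $\mathbb{R}^{\aleph_0*}$ when $\kappa = \aleph_0$) is the most delicate part of the construction.
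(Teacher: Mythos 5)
The paper does not actually prove this lemma: it is imported from \cite{Fe1} and \cite[\S 3]{Fe5}, so there is no internal proof to compare against. Your strategy is the standard one from those sources — show that every symmetric Gaussian distribution is concentrated on the connected component $X_c\cong\mathbb{R}^m\times C$, dualize to $\mathbb{R}^m\oplus D$ with $D=\widehat{C}$ countable, discrete and torsion-free, and translate ``no subgroup topologically isomorphic to $\mathbb{T}$'' into ${\rm Hom}(D,\mathbb{Z})=0$. Your reduction step (the $n^2$-homogeneity of $\varphi$, vanishing on compact elements, hence on $A(Y,X_c)$) and the lifting of the quadratic form are essentially correct.

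There is, however, a genuine gap exactly where you announce the crux to be. You state that ``the essential verification is that the subgroup $D\subseteq V$ has dense image in the ambient real space,'' and your closing paragraph concedes that this verification is not carried out. Worse, the bridge you use — ``injectivity of $q$ is equivalent to density of the adjoint image'' by Pontryagin duality — is not available as stated, since $\mathbb{R}^{\aleph_0}$ and $\mathbb{R}^{\aleph_0*}$ are not locally compact; and density of a countable subgroup of $\mathbb{R}^{\aleph_0*}$ in the strict inductive limit topology is a delicate property that your basis embedding does not obviously guarantee. The repair is that density is more than you need, and the needed statement has a one-line proof: a continuous homomorphism $\mathbb{R}^{\aleph_0}\to C={\rm Hom}(D,\mathbb{T})$ corresponds to a homomorphism $\widetilde q:D\to\mathbb{R}^{\aleph_0*}$, and $t\in{\rm Ker}\,q$ means $\langle t,\widetilde q(d)\rangle\in 2\pi\mathbb{Z}$ for all $d\in D$; then $d\mapsto\frac{1}{2\pi}\langle t,\widetilde q(d)\rangle$ is a homomorphism $D\to\mathbb{Z}$, hence zero by hypothesis, and since $\widetilde q(D)$ contains the standard basis vectors every coordinate of $t$ vanishes. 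Substituting this argument for the unproved density claim (and verifying the correspondence between ${\rm Hom}(D,\mathbb{R}^{\aleph_0*})$ and continuous homomorphisms $\mathbb{R}^{\aleph_0}\to C$) would complete the proof; as written, the proposal is incomplete at its self-declared key step.
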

\begin{lemma}\label{lem5} Let $X$ be  a second countable locally compact Abelian group containing no  subgroups topologically
isomorphic to the circle group $\mathbb{T}.$ Let $G$ be the subgroup of $X$
generated by all elements of $X$ of order $2$.   Let  $\mu\in\Gamma(X)*{\rm M}^1(G)$ and suppose that the characteristic function of the distribution $\mu$
 does not vanish.
If $\mu=\mu_1*\mu_2$,  where $\mu_j\in {\rm M}^1(X)$, then $\mu_j\in \Gamma(X)*{\rm M}^1(G)$, $j=1, 2$.
 \end{lemma}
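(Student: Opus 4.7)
The plan is to reduce to a Cram\'er-type factorization on the quotient $X/G$ and then lift back to $X$ via Lemma~\ref{lem1}. First, $G$ coincides with the closed bounded subgroup $\{x\in X:2x=0\}$ of $X$, so by Lemma~\ref{lem4} the factor-group $X/G$ contains no subgroup topologically isomorphic to $\mathbb{T}$; a standard duality gives $A(Y,G)=\overline{Y^{(2)}}$. Let $\pi\colon X\to X/G$ be the natural epimorphism and write $\mu=\gamma*\rho$ with $\gamma\in\Gamma(X)$, $\rho\in{\rm M}^1(G)$. Then $\pi_*\rho=E_0$, so $\pi_*\mu=\pi_*\gamma\in\Gamma(X/G)$, while $\pi_*\mu=\pi_*\mu_1*\pi_*\mu_2$ with nonvanishing characteristic functions. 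By the Cram\'er factorization theorem on locally compact Abelian groups without $\mathbb{T}$-subgroup (which reduces via Lemma~\ref{lem3} to the classical theorem on $\mathbb{R}^n$ or $\mathbb{R}^{\aleph_0}$), we obtain $\pi_*\mu_j\in\Gamma(X/G)$, i.e., $\hat\mu_j$ restricted to $\overline{Y^{(2)}}$ is a Gaussian characteristic function.

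Next I transcribe the problem into a functional equation on $Y$. After symmetrizing (replacing $\mu_j$ by $\mu_j*\bar\mu_j$), the characteristic functions are positive and real, and I set $A_j(y)=-\log|\hat\mu_j(y)|^2\geq 0$; then $A_1+A_2=A$, where $A=2\varphi-\log|\hat\rho|^2$, $\varphi$ is the Gaussian exponent of $\gamma$, and $|\hat\rho|^2$ is constant on cosets of $\overline{Y^{(2)}}$. Since $\Delta_h^2\varphi(y)=2\varphi(h)$ is constant in $y$ and $\Delta_{2k}$ kills any function constant on cosets of $\overline{Y^{(2)}}$ (because $2k\in Y^{(2)}\subset\overline{Y^{(2)}}$), one checks $\Delta_{2k}\Delta_h^2 A\equiv 0$. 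Combining this with the Cram\'er conclusion that $A_j|_{\overline{Y^{(2)}}}$ is Gaussian, together with the one-sided bound $A_j\geq 0$, a coset-by-coset propagation yields $\Delta_{2k}\Delta_h^2 A_j\equiv 0$. Lemma~\ref{lem1} then produces a decomposition $A_j(y)=\varphi_j(y)+r_\iota^{(j)}$ on coset $y_\iota+\overline{Y^{(2)}}$, so $|\hat\mu_j|^2$ is a Gaussian characteristic function times a coset-constant factor. Hence $\mu_j*\bar\mu_j\in\Gamma(X)*{\rm M}^1(G)$; the linear (phase) parts of $\hat\mu_j$ are then fixed by the Cram\'er step on $X/G$ and by the embedding in Lemma~\ref{lem3}, giving $\mu_j\in\Gamma(X)*{\rm M}^1(G)$.

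The main obstacle will be the propagation step, deducing $\Delta_{2k}\Delta_h^2 A_j\equiv 0$ for each $j$ from the sum identity $\Delta_{2k}\Delta_h^2(A_1+A_2)\equiv 0$. Pure additivity only gives $\Delta_{2k}\Delta_h^2 A_1=-\Delta_{2k}\Delta_h^2 A_2$; the vanishing for each factor must be extracted by combining the nonnegativity of each $A_j$ (a one-sided bound inherited from $|\hat\mu_j|\leq 1$), the Cram\'er restriction on $\overline{Y^{(2)}}$, and a coset-by-coset comparison in the spirit of the classical Ghurye--Olkin theorem.
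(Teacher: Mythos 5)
Your overall strategy is different from the paper's, but it has a genuine gap at exactly the point you yourself flag as ``the main obstacle,'' and the ingredients you propose to close it with are provably insufficient. Before the propagation step you have collected: (i) $A_1+A_2=2\varphi+c$, where $\varphi$ is quadratic and $c$ is constant on cosets of $\overline{Y^{(2)}}$; (ii) $A_j\ge 0$; (iii) the restriction of $A_j$ to $\overline{Y^{(2)}}$ is a quadratic form (the Cram\'er step on $X/G$). These three conditions do not force $\Delta_{2k}\Delta_h^2A_j\equiv 0$. Take $X=Y=\mathbb{R}\times\mathbb{Z}(2)$, where $\mathbb{Z}(2)$ is the group of order $2$, so that $\overline{Y^{(2)}}=\mathbb{R}\times\{0\}$ and there are exactly two cosets; put $A_1(t,0)=\sigma_1t^2$, $A_1(t,1)=\sigma_1t^2+r_1+\sin t$, $A_2(t,0)=\sigma_2t^2$, $A_2(t,1)=\sigma_2t^2+r_2-\sin t$ with $r_1,r_2>1$. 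All of (i)--(iii) hold, yet $A_1$ is not of the form $\varphi_1+r_\iota$ and $\Delta_{2k}\Delta_h^2A_1\not\equiv 0$. What excludes such examples is not (i)--(iii) but the requirement that $e^{-A_j}$ be positive definite on all of $Y$ (i.e., that $\mu_j*\bar\mu_j$ be a measure), and extracting the conclusion from positive definiteness is precisely the hard Cram\'er--Ghurye--Olkin-type analysis that your sketch does not supply. A secondary gap of the same nature occurs at the end, where you pass from $\mu_j*\bar\mu_j\in\Gamma(X)*{\rm M}^1(G)$ back to $\mu_j\in\Gamma(X)*{\rm M}^1(G)$: the phase of $\hat\mu_j$ is not ``fixed'' by the quotient data and needs its own argument.

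For comparison, the paper avoids functional equations here entirely and argues by transport of structure: by Lemma~\ref{lem3} there is a continuous monomorphism $p:E\to X$ (with $E=\mathbb{R}^n$ or $\mathbb{R}^{\aleph_0}$) carrying symmetric Gaussian distributions onto symmetric Gaussian distributions; since $p(E)\cap G=\{0\}$ it extends to a monomorphism $\bar p:E\times G\to X$; the distributions $\mu$ and (after translation) $\mu_1,\mu_2$ are concentrated on the Borel subgroup $p(E)\times G$, so the whole factorization problem is pulled back to $E\times G$, where the statement that factors of a convolution of a Gaussian distribution and a distribution on $G$ are again of this form is quoted from \cite[Lemma 5 and Remark 4]{Fe4}. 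That citation is where the analytic content of your propagation step actually lives; a self-contained argument along your lines would have to reprove it, using positive definiteness in an essential way.
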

\begin{proof} Since the group  $X$ contains  no  subgroups topologically
isomorphic to the circle group $\mathbb{T}$, let $p$ be 
a continuous monomorphism $p:E\rightarrow X$ which exists by 
Lemma \ref{lem3}. As far as $p$ is a monomorphism, we have $p(E) \cap G =
\{0\}$. Hence  $p$ can be extended to a continuous monomorphism
 $\bar p: E \times G
\rightarrow X$ by the formula $\bar p(t, g) = p(t) + g$, $t
\in E$, $g \in G$, and $\bar p$ generates an isomorphism of the semigroups  ${\rm M}^1(E\times G)$ and ${\rm M}^1(p(E)\times
G)$.   Let $\mu=\gamma*\omega$, where $\gamma\in\Gamma(X)$, $\omega\in {\rm M}^1(G)$. We can assume without loss of generality that $\gamma$ is a symmetric Gaussian distribution. By Lemma \ref{lem3}, $\gamma=p(M)$, where $M\in \Gamma(E)$. Hence  
$\mu=\bar p(N)$, where $N=M*\omega$. Obviously, the distribution $\mu$ is 
concentrated on
the Borel subgroup $p(E)\times G$ of the group $X$. Substituting, if
it is necessary, the distributions $\mu_j$ by their shifts, we can
assume that $\mu_j$ are also concentrated on the Borel subgroup
$p(E)\times G$. Since the semigroups  ${\rm M}^1(E\times G)$ and ${\rm M}^1(p(E)\times
G)$ are isomorphic, we have $\mu_j = \bar p(N_j)$, where
$N_j \in
 {\rm M^1}(E \times G)$ and $N_j$ is a factor of
$N$.  By \cite[Lemma 5 and Remark 4]{Fe4}, we have $N_j=M_j*\omega_j$, 
where $M_j \in \Gamma(E)$,
$\omega_j \in {\rm M^1}(G)$. Hence  $$\mu_j=\bar p (N_j)=\bar
p(M_j*\omega_j)= \bar p(M_j)*\bar p(\omega_j)=p(M_j)*  \omega_j=\gamma_j*\omega_j,$$  where $\gamma_j=  p(M_j)$.
Since $\gamma_j \in \Gamma(X)$, the  lemma is proved. \end{proof}

\noindent\textbf{\textit{Proof of Theorem $\bf{\ref{th1}}$}}  Assume that a locally compact Abelian group $X$ 
 contains no subgroups topologically isomorphic to the 2-dimensional torus 
 $\mathbb{T}^2$  but  contains a subgroup topologically isomorphic to the circle 
 group  $\mathbb{T}$. Obviously, there can be only one such subgroup. Hence
it is  invariant with respect to each topological automorphism of the group  $X$. 
Since ${\rm Aut}(\mathbb{T})=\{\pm I\}$, this implies that there is no topological automorphism $\alpha$ of the group $X$ satisfying condition 
 $(\ref{d1})$.
  Hence  we can prove the theorem assuming from the beginning that the group $X$ contains no subgroups topologically isomorphic to the circle group  $\mathbb{T}$.

Denote by $Y$  the character group of the group $X$.
By Lemma \ref{lemM}, the symmetry of the conditional distribution of the linear form   $L_2$ given  $L_1$ implies that the linear forms
$P_1=(I+\alpha)\xi_1+2\alpha\xi_2$ and
$P_2=2\xi_1+(I+\alpha)\xi_2$ are independent. By Lemma  \ref{lem13_1}, 
it follows from this that the characteristic functions
$\hat\mu_j(y)$ satisfy   equation $(\ref{4})$,
which takes the form
\begin{multline}\label{5}
\hat\mu_1((I+\widetilde\alpha) u+2 v)\hat\mu_2(2\widetilde\alpha  u+(I+\widetilde\alpha) v)\\=\hat\mu_1((I+\widetilde\alpha)
u)\hat\mu_2(2\widetilde\alpha
u)\hat\mu_1(2 v)\hat\mu_2((I+\widetilde\alpha) v), \quad u, v \in
Y.
\end{multline}
Put $\nu_j = \mu_j* \bar \mu_j$. We have  $\hat \nu_j(y) = |\hat \mu_j(y)|^2 > 0$ 
for all   $y \in Y$, $j=1, 2$.
The characteristic functions
 $\hat \nu_j(y)$
also satisfy equation $(\ref{5})$. Set $\psi_j(y)=-\ln\hat\nu_j(y)$, $j=1, 2$. It follows from
$(\ref{5})$ that the functions $\psi_j(y)$ satisfy the equation
\begin{equation}\label{6}
\psi_1((I+\widetilde\alpha) u+2 v)+\psi_2(2\widetilde\alpha  u+(I+\widetilde\alpha) v)=A(u)+B(v), \quad u, v \in
Y,
\end{equation}
where
\begin{equation}\label{16.09.15.1}
A(y)=\psi_1((I+\widetilde\alpha)
y)+\psi_2(2\widetilde\alpha
y), \quad B(y)=\psi_1(2 y)+\psi_2((I+\widetilde\alpha) y), \quad y\in Y.
\end{equation}
Equation (\ref{6}) has already appeared  earlier in the study of Heyde's theorem on various locally compact Abelian groups (see e.g. \cite{Fe7}, \cite{Fe8a}). For completeness, we give here its solution. We use the finite difference method.
Take  an arbitrary element $h_1$  of the group $Y$.
Substitute  $u+(I+\widetilde\alpha) h_1$ for $u$ and $v-2\widetilde\alpha  h_1$ for $v$ in equation
 (\ref{6}).
 Subtracting equation
  (\ref{6}) from the obtaining equation  we get
  \begin{equation}\label{7}
    \Delta_{(I-\widetilde\alpha)^2 h_1}{\psi_1((I+\widetilde\alpha) u+2 v)}
    =\Delta_{(I+\widetilde\alpha) h_1} A(u)+\Delta_{-2\widetilde\alpha  h_1} B(v),
\quad u, v\in Y.
\end{equation}
Take an arbitrary element $h_2$ of the group $Y$.
Substitute 
  $u+2h_{2}$ for  $u$ and
$v-(I+\widetilde\alpha)h_{2}$ for $v$ in equation  (\ref{7}). Subtracting equation
  (\ref{7}) from the resulting equation  we obtain
 \begin{equation}\label{8}
     \Delta_{2 h_2}\Delta_{(I+\widetilde\alpha) h_1} 
     A(u)+\Delta_{-(I+\widetilde\alpha) h_2}\Delta_{-2\widetilde\alpha  h_1} B(v)=0,
\quad u, v\in Y.
\end{equation}
Take an arbitrary element $h$ of the group $Y$.
Substituting  $u+h$ for $u$ in equation
(\ref{8})   and subtracting equation
  (\ref{8}) from the obtaining equation  we get
 \begin{equation}\label{9}
   \Delta_{h}\Delta_{2 h_2}\Delta_{(I+\widetilde\alpha) h_1} A(u)=0,
\quad u\in Y.
\end{equation}

It follows from properties of adjoint homomorphisms that if
 a topological automorphism
$\alpha$ satisfies condition
$(\ref{d1})$, then the subgroup
$(I+\widetilde\alpha)(Y)$  is dense in
$Y$. Taking into account that
$h, h_1, h_2$ are arbitrary elements of the group  $Y$, it follows from
(\ref{9}) that the function $A(y)$ satisfies the equation 
\begin{equation}\label{10}
\Delta_{2k}\Delta_h^{2} A(y) = 0, \quad y, k, h  \in Y.
\end{equation}
By Lemma \ref{lem1},   (\ref{10}) implies that the function  $A(y)$ is represented in the form (\ref{09.07.2}), where the function $\varphi(y)$, 
as is easily seen, is nonnegative.
 Denote by $\mu$ the distribution of the random variable
$P_1=(I+\alpha)\xi_1+2\alpha\xi_2$. It is obvious that 
$$\mu=(I+\alpha)(\mu_1)*(2\alpha)(\mu_2).$$ 
 Put
$\nu=\mu*\bar\mu$.
  Then
\begin{equation}\label{11.07.1}
\nu=(I+\alpha)(\mu_1)*(2\alpha)(\mu_2)*
(I+\alpha)(\bar\mu_1)*(2\alpha)(\bar\mu_2).
\end{equation}
It follows from (\ref{16.09.15.1}) and  (\ref{11.07.1})
that the characteristic function
 $\hat\nu(y)$ is of the form
 \begin{equation}\label{05.05.1}
\hat\nu(y)=e^{-A(y)}, \quad y\in Y.
\end{equation}

Denote by $\gamma$ the Gaussian distribution on the group $X$ with the characteristic function
\begin{equation}
\label{e17}\hat \gamma(y) = \exp\{-\varphi(y)\}, \quad y \in
Y.
\end{equation}
Taking into account  (\ref{09.07.3}) and  (\ref{09.07.2}), define on the group $Y$ the function $g(y)$ by the formula
\begin{equation}
\label{e18}g(y) = \exp\{- r_\iota\}, \quad y \in y_\iota +
\overline{Y^{(2)}}.
\end{equation}
Since $g(y)=\hat\nu(y)/\hat\gamma(y)$, the function $g(y)$ is continuous. Moreover, (\ref{e18}) implies that the function   $g(y)$ is invariant with respect to the subgroup   $\overline{Y^{(2)}}$. Check that  $g(y)$
is a positive definite function. Note that
\begin{equation}
\label{18.12}A(X, \overline{Y^{(2)}})=G.
\end{equation} 
 Consider   decomposition
(\ref{09.07.3}) and take a finite set of elements   
$y_{\iota_j}$, $j = 1, 2,\dots ,n$.   Let  $H$ be a subgroup of $Y$ generated all cosets
$y_{\iota_j} + \overline{Y^{(2)}}$. Obviously, it suffices to verify that the restriction of the function $g(y)$ to $H$ is a positive definite function. The subgroup
  $H$ consists of a finite number of cosets   $y_\iota +
\overline{Y^{(2)}}$. Put
   $K = A(X, H)$. Then   $H^* \cong X/K$ and obviously, $K
\subset G$.
Consider the restriction of the function  $g(y)$ to $H$.
This restriction is invariant with respect to the subgroup
$\overline{Y^{(2)}}$  and hence defines a function on the factor-group
$H/\overline{Y^{(2)}}$. We note that
 $H/\overline {Y^{(2)}}$ is a finite group  and all its nonzero elements are of the order 2.  It follows from this that any real-valued function on the factor-group
$H/ \overline {Y^{(2)}}$  is the characteristic function of a signed measure.
In particular, the restriction of the function    $g(y)$ to the subgroup $H$ is the characteristic function of a signed measure $\varpi$. Since   $(H/\overline{Y^{(2)}})^*\cong A(X/K,  \overline {Y^{(2)}})$, we can consider the  signed measure
 $\varpi$ as a  signed measure on the finite subgroup   $F = A(X/K,  \overline {Y^{(2)}})$. It follows from
(\ref{09.07.2}) and (\ref{05.05.1})--(\ref{e18}) that
the restriction of the characteristic function $\hat\nu(y)$ to $H$ is the characteristic function of the convolution of a Gaussian distribution $\lambda$ on $X/K$ and the signed measure  $\varpi$ on $F$. We verify that the signed measure $\varpi$ is actually a distribution  and this proves that  $g(y)$
is a positive definite function.

Since $K
\subset G$, by Lemma \ref{lem4}, the factor-group $X/K$ contains no subgroups topologically isomorphic to the circle group   $\mathbb{T}$. Then by Lemma \ref{lem3}, applying to the group $X/K$,   there exists a continuous monomorphism $p:E\rightarrow X/K$,
 where either  $E=\mathbb{R}^{n}$ for some $n$ or
$E=\mathbb{R}^{\aleph_0}$, such that $\lambda=p(M)$, where $M$ is   a symmetric
Gaussian distribution  on
 $E$.
 Hence  the Gaussian distribution   $\lambda$ is concentrated on the Borel subgroup $p(E)$ of the group $X/K$. Note that all nonzero elements of the group  $F$ are of the order   2  and hence 
\begin{equation}
\label{e191} p(E)\cap F = \{0\}.
\end{equation}
Since the convolution $\lambda*\varpi$ is a distribution, in view of  (\ref{e191}), 
$\varpi$ is also a distribution. Thus, we proved that  $g(y)$ is a continuous 
  positive definite function such that $g(0)=0$. By the Bochner theorem, there exists a distribution  $\omega \in {\rm M}^1(X)$ such that $\hat\omega(y) =
g(y)$, $y \in
Y$. Since $g(y)= 1$ for $y\in \overline{Y^{(2)}}$, in view of (\ref{18.12}), we have an inclusion
   $$\sigma(\omega) \subset A\left(X, \{y\in Y: g(y)=1\}\right)
\subset A(X, \overline{Y^{(2)}})=G.$$  It follows from
$\hat\nu(y)=\hat\gamma(y)g(y)$ for all $y \in
Y$, that $\nu= \gamma*\omega\in\Gamma(X)*{\rm M}^1(G)$.  By Lemma \ref{lem5},   (\ref{11.07.1}) implies that
 $(I+\alpha)(\mu_1)\in \Gamma(X)*{\rm M}^1(G)$. Taking into account that $I+\alpha$ is a continuous monomorphism, we get $\mu_1\in \Gamma(X)*{\rm M}^1(G)$.

To complete the proof of the theorem, it remains to prove 
that $\mu_2\in \Gamma(X)*{\rm M}^1(G)$.  Take an arbitrary element  
$k$ of the group $Y$ and
substitute   $v+k$ for $u$ in
(\ref{8}). Subtracting equation
  (\ref{8}) from the resulting equation, we obtain
  $$
   \Delta_{k}\Delta_{-(I+\widetilde\alpha) h_2}\Delta_{-2\widetilde\alpha  h_1} B(v)=0,
\quad v\in Y.
$$
Consider the distribution of the random variable  $P_2=2\xi_1+(I+\alpha)\xi_2$. 
Arguing in the same way as in the case when we considered the distribution of the random variable $P_1$, we prove that
$\mu_2\in \Gamma(X)*{\rm M}^1(G)$. 

\hfill$\Box$

Evidently, if the connected component of zero of a second countable locally compact    Abelian group $X$ has  dimension $1$, then $X$ contains no subgroups topologically 
isomorphic to the 2-dimensional torus $\mathbb{T}^2$. For this reason Theorem A follows from Theorem  \ref{th1}.

We note that the statement of Theorem  \ref{th1} in the case 
if a topological automorphism $\alpha$ 
of the group $X$ satisfies the conditions
\begin{equation}
\label{fe18}I\pm\alpha\in {\rm Aut(X)}  
\end{equation}
follows from \cite[Theorem 1]{Fe20bb}.

It is obvious that if a locally compact Abelian group $X$ contains 
no elements of order  2, then $X$ contains no subgroups topologically 
isomorphic to the 2-dimensional torus $\mathbb{T}^2$. Hence Theorem \ref{th1} implies the following characterization of the Gaussian distribution proved earlier in  \cite[Theorem 3]{Fe7}, see also \cite[Theorem 9.9]{book2023}.
\begin{corollary}  \label{c1}
   Let $X$ be a second
countable locally compact Abelian group containing no elements of order $2$.      Let $\alpha$ be a topological automorphism of the group    $X$ satisfying   condition $(\ref{d1})$. Let
  $\xi_1$ and  $\xi_2$ be independent random variables with values in the group
       $X$  and distributions $\mu_1$ and $\mu_2$ with nonvanishing characteristic functions.
   Then the symmetry of  the conditional
distribution of the linear form  $L_2 = \xi_1 + \alpha\xi_2$ given $L_1 = \xi_1 +
\xi_2$ implies that  $\mu_j\in \Gamma(X)$, $j=1, 2$.
\end{corollary}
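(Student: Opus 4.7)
The plan is to deduce the corollary as an immediate specialization of Theorem \ref{th1}, so the argument reduces to verifying that the hypotheses of that theorem are satisfied and that the conclusion simplifies appropriately under the added assumption.

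First I would check that $X$ contains no subgroup topologically isomorphic to the 2-dimensional torus $\mathbb{T}^2$. This is immediate: $\mathbb{T}^2$ contains elements of order 2 (for instance, the element $(-1,1)$), so any group that embeds a topological copy of $\mathbb{T}^2$ must also contain elements of order 2. Since by assumption $X$ has no such elements, it cannot contain a subgroup topologically isomorphic to $\mathbb{T}^2$. Hence the assumptions of Theorem \ref{th1} are fulfilled.

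Second I would observe that, under the assumption that $X$ has no elements of order 2, the subgroup $G$ of $X$ generated by all elements of order 2 is trivial, i.e.\ $G=\{0\}$. Consequently $\mathrm{M}^1(G)=\{E_0\}$, and therefore $\Gamma(X)*\mathrm{M}^1(G)=\Gamma(X)$.

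Combining these two observations, Theorem \ref{th1} applied to $X$ and $\alpha$ immediately gives $\mu_j\in\Gamma(X)*\mathrm{M}^1(G)=\Gamma(X)$ for $j=1,2$, which is exactly the conclusion of Corollary \ref{c1}. There is essentially no obstacle here; the corollary is a direct specialization of the main theorem, and the only thing to be careful about is the first observation, namely that the absence of elements of order 2 is strictly stronger than the absence of a topological $\mathbb{T}^2$ subgroup.
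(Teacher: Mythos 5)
Your proposal is correct and follows exactly the paper's own reasoning: the corollary is deduced from Theorem \ref{th1} by noting that a group with no elements of order $2$ contains no copy of $\mathbb{T}^2$ (so the theorem applies) and that $G=\{0\}$ (so $\Gamma(X)*{\rm M}^1(G)=\Gamma(X)$). Nothing further is needed.
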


{\rm Let  $X$ be a second
countable  locally compact Abelian group, let $G$ be the subgroup of $X$ generated by all elements of $X$ of order 2, and let $\alpha$ be a topological automorphism of the group  $X$.
  Suppose that $K={\rm Ker}(I+\alpha)\ne\{0\}$. Let $\xi_1$ and $\xi_2$ be independent identically distributed random variables with values in the subgroup  $K$ and distribution $\mu$.
 It is obvious that $\alpha x=-x$ for all
 $x\in K$. It is easy to see that the conditional distribution of the linear 
 form $P_2 = \xi_1 - \xi_2$ given
 $P_1 = \xi_1 + \xi_2$ is symmetric.
 Hence  if we consider independent random variables
  $\xi_1$ and $\xi_2$ as independent random variables taking values in $X$, then
the conditional distribution of the linear form   $L_2 = \xi_1 +
\alpha\xi_2$ given $L_1 = \xi_1 + \xi_2$ is also symmetric.
 Since $\mu$ is an arbitrary distribution, from what has been said it follows that 
a necessary condition for a topological automorphism $\alpha$ 
 for Theorem \ref{th1} to be true is an inclusion
\begin{equation}\label{9a}
   {\rm Ker}(I+\alpha)\subset G.
\end{equation}
However, generally speaking, this condition is  not sufficient.   The corresponding example can be constructed  in the case, when the group $X$ is an ${\bm a}$-adic solenoid.

Recall the definition of an ${\bm a}$-adic solenoid.
Let ${\bm a}= (a_0, a_1,\dots, a_n,\dots,)$, where all $a_j \in {\mathbb{Z}}$ and
$a_j > 1$.  Let $\Delta_{\bm a}$ be the group of ${\bm a}$-adic integers 
and let $B$ be the subgroup of the group 
 $\mathbb{R}\times\Delta_{\bm a}$ of the form
$B=\{(n,n{u})\}_{n=-\infty}^{\infty}$, where
${u}=(1, 0,\dots,0,\dots)$. The factor-group $\Sigma_{\bm a}=(\mathbb{R}\times\Delta_{\bm a})/B$ is called an   ${\bm a}$-{adic solenoid} (see e.g. \cite[\S 10]{Hewitt-Ross}). The group 
$\Sigma_{\bm a}$ is compact, connected, has dimension 1, and 
contains no subgroups topologically isomorphic to the circle group  $\mathbb{T}$. Moreover, $\Sigma_{\bm a}$ can contain at most one element of order 2.  Assume that the group $\Sigma_{\bm a}$ 
contains an element  of order 2 and denote by $G$ the subgroup of $\Sigma_\text{\boldmath $a$}$ generated by this element.
It follows from the results proved in \cite{Fe2020}, see also \cite[Proposition 11.19, Theorem 11.20 and Remark 11.22]{book2023})}    that the following statement holds.
\begin{proposition}\label{pr2} Let $\Sigma_{\bm a}$ be an   ${\bm a}$-adic solenoid 
  containing an element of order $2$. Let  $\alpha$  be  a topological automorphism  of the group $\Sigma_{\bm a}$ such that  
  \begin{equation}\label{n9a}
    {\rm Ker}(I+\alpha)=G
\end{equation}
   and hence  condition $(\ref{9a})$ holds. Then there exist
   independent random variables $\xi_1$ and $\xi_2$   with values in the group  $\Sigma_{\bm a}$ and distributions  $\mu_1$ and $\mu_2$ with nonvanishing 
   characteristic functions such that  the conditional distribution of 
   the linear form
  $L_2 = \xi_1 + \alpha\xi_2$ given $L_1 = \xi_1 +
\xi_2$ is symmetric, whereas 
$\mu_j\notin \Gamma(\Sigma_{\bm a})*{\rm M^1}(G)$, $j=1, 2$.
\end{proposition}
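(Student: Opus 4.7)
The plan is to construct the required $\mu_1,\mu_2$ explicitly on the dual side. Identify the character group $Y=\Sigma_{\bm a}^*$ with the rank-one subgroup of $\mathbb{Q}$ generated by $\{(a_0 a_1\cdots a_n)^{-1}:n\ge 0\}$. Since $\Sigma_{\bm a}$ has an element of order $2$, exactly one $a_j$ is even, so $A(Y,G)$ is an index-$2$ subgroup of $Y$. Every topological automorphism of $\Sigma_{\bm a}$ acts on $Y$ as multiplication by some rational number $r$ with $rY=Y$, and the hypothesis $(\ref{n9a})$ translates into the equality $(1+r)Y=A(Y,G)$.

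By Lemmas \ref{lemM} and \ref{lem13_1}, symmetry of the conditional distribution of $L_2$ given $L_1$ is equivalent to $\hat\mu_1,\hat\mu_2$ satisfying the independence equation $(\ref{5})$, which under the above identification becomes
\begin{equation*}
\hat\mu_1((1+r)u+2v)\,\hat\mu_2(2ru+(1+r)v)=\hat\mu_1((1+r)u)\,\hat\mu_2(2ru)\,\hat\mu_1(2v)\,\hat\mu_2((1+r)v),\quad u,v\in Y.
\end{equation*}
I would seek $\hat\mu_j$ in the form $\hat\mu_j(y)=\exp\{-\sigma_j y^2\}\,f_j(y)$, where the Gaussian factor keeps $\hat\mu_j$ nonvanishing and absorbs the quadratic part of the equation; matching the quadratic exponents forces $\sigma_1+r\sigma_2=0$, which is compatible with $\sigma_j>0$ in the typical case $r<0$ (and such $\alpha$ is available whenever $(\ref{n9a})$ is consistent at all). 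After this cancellation the residual condition is the same multiplicative equation for $f_1,f_2$, and the problem reduces to exhibiting continuous, nonvanishing, positive definite $f_1,f_2$ on $Y$ solving it that are \emph{not} of the form $c_0+c_1\epsilon(y)$, where $\epsilon$ denotes the nontrivial character of $Y/A(Y,G)$.

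Such $f_j$ can be realized as characteristic functions of symmetric distributions $\omega_j*\bar\omega_j$ on $\Sigma_{\bm a}$, with $\omega_j$ chosen from the ${\bm a}$-adic structure of the solenoid — concretely, supported on finite cyclic subgroups of $\Sigma_{\bm a}$ reflecting the arithmetic of ${\bm a}$ — so that the functional equation is enforced on the generators of $Y$ coming from the factors $a_0 a_1\cdots a_n$. Positive definiteness is then automatic by Bochner's theorem. The main obstacle is guaranteeing simultaneously that $f_j$ is nonvanishing and, crucially, that it fails to be $A(Y,G)$-periodic. The latter failure is precisely what excludes $\mu_j$ from $\Gamma(\Sigma_{\bm a})*{\rm M}^1(G)$: on $Y$, any member of that convolution class has characteristic function of the form $\chi(y)\exp\{-\tau y^2\}(c_0+c_1\epsilon(y))$, whose ``non-Gaussian'' factor is $A(Y,G)$-periodic, contradicting the construction. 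This program is realised in full in \cite{Fe2020} (see also \cite[Proposition 11.19, Theorem 11.20, Remark 11.22]{book2023}) by writing the $\omega_j$ down concretely in terms of $\bm a$.
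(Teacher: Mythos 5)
The paper does not actually prove Proposition \ref{pr2}: it is stated as a consequence of the results of \cite{Fe2020} (see also \cite[Proposition 11.19, Theorem 11.20 and Remark 11.22]{book2023}), and your proposal likewise ends by deferring the entire substance of the construction --- the explicit non-Gaussian factors $f_j$, their nonvanishing and positive definiteness, and the verification that $\mu_j\notin\Gamma(\Sigma_{\bm a})*{\rm M}^1(G)$ --- to those same references. That would be tolerable if the reduction you set up were sound, but it contains a genuine error.

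You assert that, by Lemmas \ref{lemM} and \ref{lem13_1}, the symmetry of the conditional distribution of $L_2$ given $L_1$ is \emph{equivalent} to equation (\ref{5}). Lemma \ref{lemM} gives only one implication: symmetry implies the independence of $P_1$ and $P_2$, and hence implies (\ref{5}); the converse is false in general (for instance, degenerate $\mu_1,\mu_2$ always satisfy (\ref{5}) but almost never give a symmetric conditional distribution). For a positive theorem the forward implication is all one needs, but here you are building a \emph{counterexample}, so you must verify the symmetry itself. The correct criterion is Lemma \ref{lem6}: symmetry holds if and only if $\hat\mu_1(u+v)\hat\mu_2(u+\widetilde\alpha v)=\hat\mu_1(u-v)\hat\mu_2(u-\widetilde\alpha v)$ for all $u,v\in Y$, i.e., equation (\ref{03.05.18}), and this is precisely the equation checked in the paper's proof of the analogous Theorem \ref{th2} on $\mathbb{T}^2$. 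Producing solutions of (\ref{5}) does not yield the conclusion of the proposition. (It happens that for the Gaussian factors both equations impose the same constraint $\sigma_1+r\sigma_2=0$, but the residual equation your $f_j$ must satisfy is (\ref{03.05.18}), not the multiplicative equation you wrote down.) A smaller slip: $\Sigma_{\bm a}$ contains an element of order $2$ if and only if $2Y\ne Y$, i.e., if and only if only finitely many $a_j$ are even --- not exactly one; the paper's own example ${\bm a}=(3,3,\dots)$, $\alpha=-3$ has no even $a_j$ at all. Your dual-side bookkeeping ($Y$ a rank-one subgroup of $\mathbb{Q}$, $A(Y,G)=2Y$ of index $2$, condition (\ref{n9a}) becoming $(1+r)Y=2Y$) is otherwise correct and consistent with the setting of \cite{Fe2020}.
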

An example of   an ${\bm a}$-adic solenoid $\Sigma_{\bm a}$
 and a topological automorphism $\alpha\in{\rm Aut}(\Sigma_{\bm a})$ such that
 (\ref{n9a}) holds
is $\Sigma_{\bm a}$, where  ${\bm a}=(3,3, \dots, 3, \dots)$,
and $\alpha$ is the multiplication by $-3$.

\section{Theorem A is false for the 2-dimensional torus $\mathbb{T}^2$}

We prove in this section that  Theorem  A is false for
  the 2-dimensional torus   $\mathbb{T}^2$.
Let $X=\mathbb{T}^2$. Denote by $x=(z, w)$, $z, w\in
\mathbb{T}$, elements of the group $X$. The character group $Y$ of the group $X$ is topologically isomorphic to the group    $\mathbb{Z}^2$.   Denote by $y=(m,n)$, $m, n\in\mathbb{Z}$, elements of the group $Y$.  Every automorphism $\alpha\in {\rm Aut}(X)$
is defined by an integer-valued matrix
$\left(\begin{matrix}a&b\\
c&d\end{matrix}\right)$, where $|ad-bc|=1$  and $\alpha$ acts on
$X$ as follows
$$
\alpha(z,w)=(z^a w^c,z^b w^d),\quad (z,w)\in X.
$$
The adjoint automorphism    $\widetilde\alpha\in {\rm
Aut}(Y)$ is of the form
$$\widetilde\alpha(m,n)=(am+bn,cm+dn),\quad (m,n)\in Y.$$
We identify    $\alpha$    with the   matrix
$\left(\begin{matrix}a&b\\
c&d\end{matrix}\right)$  and $\widetilde\alpha$  with the   matrix
$\left(\begin{matrix}a&c\\
b&d\end{matrix}\right)$. 

It follows from the definition of the Gaussian distribution on a 
locally compact Abelian group that the characteristic function of a 
symmetric Gaussian distribution on the group  $X$ is of the form
$$
\hat\gamma(m, n)=\exp\{-\langle A(m, n), (m, n)\rangle\},\quad
(m, n)\in Y,
$$
where $\langle\cdot,\cdot\rangle$ is the standard scalar product in  $\mathbb{R}^2$, $A=(a_{ij})_{i,j=1}^2$ is a symmetric positive semidefinite matrix.

\begin{theorem}\label{th2}  Let  $X=\mathbb{T}^2$   and let  $G$ be the 
subgroup of $X$ generated by all elements 
of $X$ of order
  $2$. Then there exist a topological automorphism  $\alpha$ of the group  
  $X$ satisfying  condition $(\ref{d1})$ and independent random variables   $\xi_1$ and 
$\xi_2$ with values in   $X$ and distributions
  $\mu_1$ and $\mu_2$ with nonvanishing characteristic functions such that the conditional distribution of the linear form
$L_2=\xi_1+\alpha\xi_2$ given $L_1=\xi_1+\xi_2$
is symmetric,  whereas $\mu_j\notin \Gamma(X)*{\rm M^1}(G)$, $j=1, 2$.
\end{theorem}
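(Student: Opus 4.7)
The plan is to exhibit a concrete counterexample by exploiting the single step in the proof of Theorem \ref{th1} that breaks down on $\mathbb{T}^2$, namely the positive definiteness of the function $g(y)=\exp\{-r_\iota\}$. That step uses Lemma \ref{lem3} (together with Lemma \ref{lem4}) applied to $X/K$ with $K\subset G$: it provides a monomorphism $p\colon E\to X/K$ with $p(E)\cap F=\{0\}$, which forces the signed measure $\varpi$ on the finite subgroup $F$ to be a probability measure. When $X=\mathbb{T}^2$, however, $X/K$ itself contains a circle subgroup (indeed $X/G\cong\mathbb{T}^2$), no such monomorphism is available, and $\varpi$ can be a genuinely signed measure while $\lambda*\varpi$ is nonetheless a probability measure.

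First I would fix a topological automorphism $\alpha$ of $\mathbb{T}^2$ with $\det(I+\alpha)=\pm 1$, so that condition $(\ref{d1})$ holds; a convenient choice is $\alpha=\left(\begin{smallmatrix}-1 & 1\\ 1 & 0\end{smallmatrix}\right)$, which has $\det\alpha=\det(I+\alpha)=-1$. Guided by the decomposition produced in the proof of Theorem \ref{th1}, I would seek $\mu_j$ with characteristic functions of the form
$$\hat\mu_j(m,n)=\exp\{-Q_j(m,n)\}\,\psi_j(m\bmod 2,n\bmod 2),\quad (m,n)\in Y=\mathbb{Z}^2,$$
where $Q_j$ is a nonnegative quadratic form on $\mathbb{Z}^2$ and $\psi_j\colon\mathbb{Z}_2^2\to\mathbb{R}$ with $\psi_j(0,0)=1$. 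Substituting this ansatz into equation $(\ref{5})$ and noting that $2v$ and $2\widetilde\alpha u$ lie in $\overline{Y^{(2)}}=(2\mathbb{Z})^2$ while $\psi_j$ is $\overline{Y^{(2)}}$-invariant, the $\psi_j$-factors cancel on the two sides and the equation collapses to a linear matrix identity on the matrices of $Q_1,Q_2$, of the form $A_1(I+\widetilde\alpha)+(I+\widetilde\alpha)^T A_2\widetilde\alpha=0$, which admits a nontrivial psd solution for the above $\alpha$.

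With $Q_1,Q_2$ fixed, I pick $\psi_j$ real on $\mathbb{Z}_2^2$ with $\psi_j(0,0)=1$ but \emph{not} positive definite on $\mathbb{Z}_2^2$, so that the associated signed measure $\omega_j$ on $G\cong\mathbb{Z}_2^2$ fails to be a probability measure. I write $\mu_j=\gamma_j*\omega_j$ where $\gamma_j$ is the Gaussian with $\hat\gamma_j=\exp\{-Q_j\}$, and verify that $\hat\mu_j$ is positive definite on $\mathbb{Z}^2$ (hence $\mu_j\in {\rm M^1}(X)$) by a Gaussian smearing argument: for $\psi_j$ close enough to the constant $1$, the small negative atoms of $\omega_j$ are absorbed by the shifted copies of $\gamma_j$. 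Nonvanishing of $\hat\mu_j$ is automatic from $\psi_j>0$. Finally, any factorization $\mu_j=\gamma'*\omega'$ with $\gamma'\in\Gamma(X)$ and $\omega'\in{\rm M^1}(G)$ would, upon comparing characteristic functions restricted to $\overline{Y^{(2)}}$ (where $\widehat{\omega'}\equiv 1$), give $\gamma'=\gamma_j$ and $\widehat{\omega'}=\psi_j$ on $\mathbb{Z}_2^2$, forcing $\psi_j$ to be positive definite, a contradiction. Hence $\mu_j\notin\Gamma(X)*{\rm M^1}(G)$.

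The hardest part will be arranging these three conditions---the linear matrix constraint, positive definiteness of $\hat\mu_j$, and non-positive-definiteness of $\psi_j$---simultaneously. For certain $\alpha$ the matrix constraint forces $Q_j$ to be degenerate (rank one), in which case $\gamma_j$ is concentrated on a one-parameter subgroup of $\mathbb{T}^2$ and the smearing argument has to be carried out along the support of $\gamma_j$, possibly requiring a different choice of $\alpha$ or a more delicate selection of $\psi_j$. This calibration is precisely the freedom available on $\mathbb{T}^2$ that Lemma \ref{lem3} rules out on the groups covered by Theorem \ref{th1}: there every symmetric Gaussian must come from a monomorphism out of $\mathbb{R}^n$ or $\mathbb{R}^{\aleph_0}$, so after quotienting by $K\subset G$ the Gaussian has no ``direction'' inside $F$ with which to smear, and the corresponding $\varpi$ is compelled to be a probability measure.
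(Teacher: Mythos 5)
Your route is genuinely different from the paper's. The paper does not place a signed perturbation on $G$ at all: it takes $\det\alpha=1$ and $a+d=-3$, observes that $H=(I-\widetilde\alpha)(Y)$ has index $5=\det(I-\alpha)$ in $Y$, and multiplies the Gaussian factors $h_j^k$ by the characteristic functions of $\pi_1=\kappa E_{(1,1)}+(1-\kappa)m_K$ and of the signed measure $\pi_2=\kappa^{-1}E_{(1,1)}+(1-\kappa^{-1})m_K$, where $K=A(X,H)$ is a subgroup of order $5$. Positivity of $\mu_j$ is then obtained from absolute summability of the Fourier coefficients dominated by the term at $(0,0)$, and $\mu_j\notin\Gamma(X)*{\rm M}^1(G)$ because the restriction of $\hat\mu_j$ to $Y^{(2)}=A(Y,G)$ is not of Gaussian form: it jumps by the factor $\kappa$ on $Y^{(2)}\setminus H$, and the paper checks $Y^{(2)}\not\subset H$. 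Your alternative --- a genuinely signed $\omega_j$ supported on $G$ itself, absorbed by a nondegenerate wrapped Gaussian --- is a legitimate second construction, and your non-membership argument (restricting to $Y^{(2)}$ and using that a quadratic $\varphi$ with $\varphi(2y)=4\varphi(y)$ is pinned down by its values on $Y^{(2)}$) is sound.

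There is, however, a genuine gap, and it sits exactly where you flag uncertainty. For your $\alpha=\left(\begin{smallmatrix}-1&1\\ 1&0\end{smallmatrix}\right)$ one has $\det\alpha=-1$, and the constraint $A_1=-A_2\widetilde\alpha$ (equivalently $A_1+A_2\widetilde\alpha=0$, which is what equation (\ref{03.05.18}) requires; note also that you should verify (\ref{03.05.18}) directly rather than (\ref{5}), since (\ref{5}) only expresses independence of $P_1,P_2$ and does not by itself yield the symmetry of the conditional distribution) gives $\det A_1=\det\alpha\cdot\det A_2=-\det A_2$, so positive semidefiniteness forces $\det A_1=\det A_2=0$. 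Both forms are then rank one, with range along an eigenvector of an integer matrix whose eigenvalues $(-1\pm\sqrt5)/2$ are irrational; the Gaussians $\gamma_j$ are concentrated on a dense one-parameter subgroup meeting $G$ only in $0$, the four translates $\gamma_j*E_g$, $g\in G$, are mutually singular, and $\gamma_j*\omega_j\ge 0$ forces $\omega_j\ge 0$. So for this $\alpha$ no smearing is possible and your $\psi_j$ would have to be positive definite after all. The fix is to take $\det\alpha=1$ and $a+d<-2$ and to invoke (or reprove) Lemma \ref{lem7}, which supplies positive definite $A_1,A_2$ with $A_1+A_2\widetilde\alpha=0$; then the wrapped Gaussian density is bounded below on $\mathbb{T}^2$ and your absorption argument for small negative atoms of $\omega_j$ goes through. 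Until the simultaneous solvability of your three conditions is established for some concrete $\alpha$, the construction is not complete.
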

 
 For the proof of Theorem \ref{th2} we need the following lemmas.
\begin{lemma}\label{lem7}    {\rm (\!\!\cite{MiFe4}, see also \cite[Lemma 11.2]{Fe5})}   Consider the $2$-dimensional torus  $\mathbb{T}^2$. Let $\alpha=\left(\begin{matrix}a&b\\
c&d\end{matrix}\right)$ be a topological automorphism of the group $\mathbb{T}^2$.
If $ad-bc=1$ and   $a+d<-2$, then there exist symmetric positive semidefinite 
$2\times 2$ matrices $A_1$ and $A_2$ such that
\begin{equation}\label{03.05.2}
\det A_1=\det A_2 > 0
\end{equation}
and
\begin{equation}\label{03.05.3}
A_1+A_2\widetilde\alpha =0.
\end{equation}
\end{lemma}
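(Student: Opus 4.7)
The plan is to diagonalize $\widetilde\alpha$ over $\mathbb{R}$ and then read off $A_1$ and $A_2$ in closed form from that diagonalization. The two hypotheses $ad-bc=1$ and $a+d<-2$ are tailored precisely so that $\widetilde\alpha$ has two distinct, strictly negative real eigenvalues; once this is in hand the rest of the argument is routine linear algebra.

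First I would compute the characteristic polynomial of $\widetilde\alpha$, namely $\lambda^2-(a+d)\lambda+1$. Its discriminant $(a+d)^2-4$ is strictly positive under the hypothesis $a+d<-2$, so $\widetilde\alpha$ has two distinct real eigenvalues $\lambda_1$ and $\lambda_2$. Since their product equals $1$ and their sum is less than $-2$, both $\lambda_j$ are strictly negative. I would then choose a real invertible matrix $P$ whose columns are eigenvectors of $\widetilde\alpha$, so that $P^{-1}\widetilde\alpha P=D$ with $D=\mathrm{diag}(\lambda_1,\lambda_2)$.

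Next I would set
\[
A_2:=(P^T)^{-1}P^{-1},\qquad A_1:=-A_2\widetilde\alpha=(P^T)^{-1}(-D)P^{-1},
\]
the last equality obtained by substituting $\widetilde\alpha=PDP^{-1}$. Both matrices have the form $(P^T)^{-1}\Lambda P^{-1}$ with $\Lambda$ a real diagonal matrix of strictly positive entries, namely $\mathrm{diag}(1,1)$ in the case of $A_2$ and $\mathrm{diag}(-\lambda_1,-\lambda_2)$ in the case of $A_1$. Hence both are symmetric and positive definite. Relation $(\ref{03.05.3})$ holds by the definition of $A_1$, and $(\ref{03.05.2})$ is automatic because
\[
\det A_1=\det(-A_2\widetilde\alpha)=\det A_2\cdot\det\widetilde\alpha=\det A_2>0,
\]
using $\det\widetilde\alpha=ad-bc=1$.

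There is no serious obstacle; the conceptual content of the lemma is merely the observation that the spectral condition ``trace less than $-2$, determinant equal to $1$'' forces both eigenvalues of $\widetilde\alpha$ to be negative, which is precisely what allows $A_2$ and $-A_2\widetilde\alpha$ to be simultaneously positive definite.
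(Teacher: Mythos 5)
Your argument is correct and complete: the hypotheses $ad-bc=1$ and $a+d<-2$ give two distinct real eigenvalues of $\widetilde\alpha$ with product $1$ and negative sum, hence both negative, and your explicit choice $A_2=(P^{T})^{-1}P^{-1}$, $A_1=-A_2\widetilde\alpha=(P^{-1})^{T}(-D)P^{-1}$ visibly yields symmetric positive definite matrices satisfying $(\ref{03.05.3})$, with $\det A_1=\det A_2\cdot\det\widetilde\alpha=\det A_2>0$. The paper itself does not prove this lemma but imports it from the cited references, so there is no in-text argument to compare against; your spectral construction is a clean, self-contained justification (and in fact delivers positive definiteness, slightly more than the stated positive semidefiniteness).
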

\begin{lemma}\label{lem6}   {\rm (\!\!\cite[Lemma 16.1]{Fe5})}  Let $X$ be a second countable locally compact Abelian group   with character group $Y$. 
Let $\alpha$ be a topological automorphism of  $X$.
Let $\xi_1$ and  $\xi_2$ be independent random variables with values in the group  $X$ and distributions $\mu_1$ and $\mu_2$. The conditional distribution of the linear form  $L_2 = \xi_1 + \alpha\xi_2$ given $L_1 = \xi_1 + \xi_2$ is symmetric if and only if the characteristic functions
 $\hat\mu_j(y)$  satisfy the equation
\begin{equation}\label{03.05.18}
\hat\mu_1(u+v )\hat\mu_2(u+\widetilde\alpha v )=
\hat\mu_1(u-v )\hat\mu_2(u-\widetilde\alpha v), \ \ u, v \in Y.
\end{equation}
\end{lemma}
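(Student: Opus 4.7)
The plan is to prove the biconditional by reformulating both sides in terms of the joint distribution of the pair $(L_1, L_2)$ on the product group $X \times X$. Symmetry of the conditional distribution of $L_2$ given $L_1$, by definition, means $P(L_2 \in B \mid L_1) = P(-L_2 \in B \mid L_1)$ almost surely for every Borel set $B \subset X$. Integrating this identity against the marginal distribution of $L_1$ shows it is equivalent to the joint distributional identity $(L_1, L_2) \stackrel{d}{=} (L_1, -L_2)$ on $X \times X$; the converse direction uses the existence of regular conditional probabilities, which holds since $X$ is Polish (being second countable locally compact). By the uniqueness theorem for the Fourier transform on the locally compact Abelian group $X \times X$ (whose character group is $Y \times Y$), this joint-distributional equality is in turn equivalent to the equality of the joint characteristic functions at the relevant points, namely $\phi(u, v) = \phi(u, -v)$ for all $u, v \in Y$, where $\phi(u, v) := E[(L_1, u)(L_2, v)]$.

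It remains to compute $\phi(u, v)$ explicitly. Using $L_1 = \xi_1 + \xi_2$ and $L_2 = \xi_1 + \alpha\xi_2$, bilinearity of the character pairing, and the defining property $(\alpha x, v) = (x, \widetilde\alpha v)$ of the adjoint endomorphism, one obtains
\[
(L_1, u)(L_2, v) = (\xi_1 + \xi_2, u)(\xi_1 + \alpha\xi_2, v) = (\xi_1, u + v)(\xi_2, u + \widetilde\alpha v).
\]
Independence of $\xi_1$ and $\xi_2$ then yields $\phi(u, v) = \hat\mu_1(u + v)\hat\mu_2(u + \widetilde\alpha v)$, and substituting $-v$ for $v$ gives $\phi(u, -v) = \hat\mu_1(u - v)\hat\mu_2(u - \widetilde\alpha v)$. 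Hence the condition $\phi(u, v) = \phi(u, -v)$ for all $u, v \in Y$ is precisely equation $(\ref{03.05.18})$, completing the proof. The only delicate step is the bridge between conditional symmetry and joint-distributional symmetry via regular conditional distributions, which is a standard consequence of the Polish structure on $X$ and $X \times X$; everything else is a direct computation.
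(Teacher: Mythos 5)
Your proof is correct, and it is essentially the standard argument: the paper itself gives no proof of this lemma, citing it from \cite[Lemma 16.1]{Fe5}, where the proof proceeds exactly as you do --- identifying conditional symmetry with the joint-distributional identity $(L_1,L_2)\stackrel{d}{=}(L_1,-L_2)$, passing to characteristic functions on $X\times X$, and computing $E[(L_1,u)(L_2,v)]=\hat\mu_1(u+v)\hat\mu_2(u+\widetilde\alpha v)$ via independence and the adjoint relation. Nothing further is needed.
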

\noindent\textbf{\textit{Proof of Theorem $\bf{\ref{th2}}$}}  Consider a topological automorphism $\alpha=\left(\begin{matrix}a&b\\
c&d\end{matrix}\right)$ of the group $X$ such that  $\det\alpha=1$ and $a+d=-3$. 
It follows from this that $\det(I+\alpha)=-1$ and hence $I+\alpha\in{\rm Aut}(X)$.
For this reason condition (\ref{d1}) holds.
By Lemma \ref{lem7},  there exist symmetric positive semidefinite $2\times 2$ matrices $A_1$ and $A_2$ such that   (\ref{03.05.2}) and (\ref{03.05.3}) are valid.

Note that $\det(I-\alpha)=\det(I-\widetilde\alpha)=5$. 
Hence $I-\widetilde\alpha\notin{\rm Aut}(X)$ and it follows from this that 
$H=(I-\widetilde\alpha)(Y)$ is a proper subgroup of $Y$. This implies that
 $K=A(X, H)\ne \{0\}$.

  Let us check that $Y^{(2)}\setminus H\ne\emptyset$. Suppose the contrary is true, i.e., $Y^{(2)}\subset H$. Then, on the one hand, for $H$ we have the following possibilities:
  \begin{equation}\label{16.1}
H=\{(2m, 2n):m, n\in \mathbb{Z}\}, \quad H=\{(2m, n):m, n\in \mathbb{Z}\},
\end{equation}
\begin{equation}\label{n16.1}
H=\{(m, 2n):m, n\in \mathbb{Z}\}, \quad H=\{(2m, 2n),(2m-1, 2n-1):m, n\in \mathbb{Z}\}.
\end{equation}
On the other hand,  $H=\{((1-a)m-bn,-cm+(1-d)n):m, n\in \mathbb{Z}\}$  
and this implies that
\begin{equation}\label{16.2}
(-b, 1-d), (1-a, -c)\in H.
\end{equation}
Since $a+d=-3$, this implies that $a$ and $d$ have different parity.
In view  of  $ad-bc=1$, we have either $a$ is even  and $b, c, d$ are odd  or
$d$ is even  and $a, b, c$ are odd. Anyway (\ref{16.2}) contradicts (\ref{16.1}) and (\ref{n16.1}).
Hence  $Y^{(2)}\setminus H\ne\emptyset$.

Take $0< \kappa < 1$ and consider on the group $X$ the distribution
  $$\pi_1=\kappa E_{(1,1)}+(1-\kappa)m_K$$ and the signed measure
$$\pi_2=\frac{1}{\kappa}E_{(1,1)}+\frac{\kappa-1}{\kappa}m_K.$$  Since $H=A(Y, K)$, it follows from (\ref{10.1}) that
$$
\widehat m_K(y)=
\begin{cases}
1, & \text{\ if\ }\   y\in H,
\\ 0, & \text{\ if\ }\ y\not\in H,
\end{cases}
$$
and the characteristic functions  $\hat\pi_j(y)$ are of the form
\begin{equation}\label{04.05.1}
\hat\pi_1(y)=
\begin{cases}
1, & \text{\ if\ }\   y\in H,
\\  \kappa, & \text{\ if\ }\ y\not\in H,
\end{cases}\quad\quad \hat\pi_2(y)=
\begin{cases}
1, & \text{\ if\ }\   y\in H,
\\  \displaystyle{\frac{1}{\kappa}}, & \text{\ if\ }\ y\not\in H.
\end{cases}
\end{equation}
Let us check that the characteristic functions    $\hat\pi_j(y)$ satisfy equation
(\ref{03.05.18}). To see this we verify that for any $u, v \in Y$    both sides of equation
(\ref{03.05.18}) are equal to 1. Suppose that there exist  $u, v\in Y$ such that
$\hat\pi_1(u+v)\hat\pi_2(u+\widetilde\alpha v)\neq 1$. In view of (\ref{04.05.1}) then either
  $u+v\in H, u+\widetilde\alpha v\not\in H$ or $u+v\not\in H,
u+\widetilde\alpha v\in H$. In both cases we obtain that
$(I-\widetilde\alpha)v\not\in H$. This is impossible because $H=(I-\widetilde\alpha)(Y)$. Thus, the left hand-side of equation  (\ref{03.05.18}) for any 
$u, v \in Y$ is equal to 1. Reasoning similarly we check that the right hand-side of equation (\ref{03.05.18}) for any $u, v \in Y$ is also equal 1. Hence  the characteristic functions    $\hat\pi_j(y)$ satisfy equation
(\ref{03.05.18}).

It is easy to see that since (\ref{03.05.3}) is valid, the functions 
$$
h_j(m, n)=\exp\{-\langle A_j(m, n), (m, n)\rangle\}, \quad  j=1, 2,
$$ 
satisfy equation (\ref{03.05.18}).  Hence  the functions
\begin{equation}\label{04.05.2}
g_j(m, n)=h_j^k(m, n)\hat\pi_j(m, n), \quad (m, n)\in Y,   \ j=1, 2,
\end{equation}
for any natural $k$  also satisfy equation (\ref{03.05.18}).  

It follows from (\ref{03.05.2}) that there is $\varepsilon>0$ such that
\begin{equation}\label{n08.07.1}
\langle A_j(m, n), (m, n)\rangle\ge\varepsilon(m^2+n^2), \quad  j=1, 2.
\end{equation}
Since $g_j(0, 0)=1$,   inequalities (\ref{n08.07.1}) imply 
that for a big enough $k$  the inequalities
\begin{equation}\label{08.07.1}
\sum_{(m, n)\in Y} g_j(m, n) < 2, \quad j=1, 2,
\end{equation}
hold  true. Put
$$
\rho_j(z, w)=\sum_{(m, n)\in Y}
g_j(m,n)\bar{z}^m\bar{w}^n, \quad (z,w) \in X,  \   j=1, 2.
$$
Since  $g_j(-y)=g_j(y)$ for all $y\in Y$, it follows from (\ref{08.07.1})
that then
$\rho_j(z, w)>0$ for all $(z,w) \in X$,  $j=1, 2$.
It is also obvious that
$$
\int\limits_{X}\rho_j(z, w)dm_{X}(z, w)=1, \quad j=1, 2.
$$
Thus, the functions $\rho_j(z, w)$ are densities  with respect to 
$m_{X}$ of   some distributions   $\mu_j$   on the group $X$. In so doing,
$\hat\mu_j(y)=g_j(y)$, $j=1, 2$. Let $\xi_1$
and $\xi_2$ are independent random variables with values  in the group
$X$ and distributions $\mu_1$ and $\mu_2$. Since the characteristic functions  $\hat\mu_j(y)$ satisfy equation
(\ref{03.05.18}), by Lemma \ref{lem6}, the conditional distribution of the linear form
  $L_2 = \xi_1 + \alpha\xi_2$ given $L_1 = \xi_1 + \xi_2$ is symmetric.
 Note now that $Y^{(2)}=A(Y, G)$. For this reason if   $\mu\in{\rm M}^1(X)$ and $\mu\in\Gamma(X)*{\rm M}^1(G)$, then the restriction of the characteristic function $\hat\mu(y)$
to the subgroup $Y^{(2)}$ is the characteristic function of a Gaussian distribution.
Since $Y^{(2)}\setminus H\ne\emptyset$, it follows from (\ref{04.05.1}) and (\ref{04.05.2}) that $\mu_j\notin\Gamma(X)*{\rm M}^1(G)$, $j=1, 2$. \hfill$\Box$

\medskip

     In view of Theorem \ref{th2}, it is interesting to note that  Theorem A holds true for the group ${\mathbb
T}^2$  if we substitute condition $(\ref{d1})$ for    $(\ref{fe18})$ (\!\!\cite{Fe4}, see also \cite[Theorem 16.8]{Fe5}). Namely, the following statement holds.
\begin{b*} \label{b*} Consider the $2$-dimensional torus  $\mathbb{T}^2$   
and let  $G$ be the subgroup of $\mathbb{T}^2$ generated by all 
elements of $\mathbb{T}^2$ of order
  $2$. Assume that a topological automorphism  $\alpha$ of the group  $\mathbb{T}^2$ satisfies  conditions $(\ref{fe18})$. Let $\xi_1$ and $\xi_2$  be independent random variables   with values in the group   $\mathbb{T}^2$ and distributions
  $\mu_1$ and $\mu_2$ with nonvanishing characteristic functions. 
  If the conditional distribution of the linear form
$L_2=\xi_1+\alpha\xi_2$ given $L_1=\xi_1+\xi_2$
is symmetric, then 
$\mu_j = \gamma_j
* \omega_j$,  where $\gamma_j\in \Gamma(\mathbb{T}^2)$,
 $\omega_j\in{\rm M^1}(G)$, $j=1, 2$.
 In so doing the Gaussian distributions
$\gamma_j$ are concentrated on cosets of the same dense one-parameter subgroup in 
$\mathbb{T}^2$.
\end{b*}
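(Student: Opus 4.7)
The plan is to transport the finite-difference scheme of Theorem \ref{th1} to $\mathbb{T}^2$, with the hypothesis $(\ref{fe18})$ providing the extra surjectivity that compensates for the absence of the no-$\mathbb{T}$-subgroup condition (and thus closes the gap exhibited by Theorem \ref{th2}). First, apply Lemma \ref{lemM} and Lemma \ref{lem13_1}, pass to $\nu_j = \mu_j * \bar\mu_j$ with $\psi_j = -\ln\hat\nu_j$, and perform the three finite-difference substitutions used in the proof of Theorem \ref{th1} to obtain $\Delta_h\Delta_{2h_2}\Delta_{(I+\widetilde\alpha)h_1}A(u) = 0$. Because $I+\widetilde\alpha \in {\rm Aut}(Y)$ with $Y = \mathbb{Z}^2$, equation $(\ref{10})$ holds unconditionally; since $\overline{Y^{(2)}} = Y^{(2)} = A(Y, G)$ in the discrete setting, Lemma \ref{lem1} yields $A(y) = \varphi(y) + r_\iota$ on the four $Y^{(2)}$-cosets with $\varphi$ a nonnegative quadratic form $\langle M(\cdot),\cdot\rangle$. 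Define $\gamma \in \Gamma(\mathbb{T}^2)$ by $\hat\gamma = \exp(-\varphi)$ and $g = \hat\nu/\hat\gamma$, which is $Y^{(2)}$-periodic.

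The main obstacle is proving that $g$ is positive definite, i.e.\ that the Fourier coefficients $\exp(-r_\iota)$ assemble into the characteristic function of a probability measure on $G$. In the proof of Theorem \ref{th1} this used Lemma \ref{lem4} and Lemma \ref{lem3}, both unavailable here, and Theorem \ref{th2} shows the conclusion can genuinely fail under $(\ref{d1})$ alone. The second clause of $(\ref{fe18})$ is precisely what closes the gap: in Theorem \ref{th2}'s counterexample the pathological signed measure lived on $A(X,(I-\widetilde\alpha)(Y))$, and surjectivity of $I-\widetilde\alpha$ kills this annihilator. Concretely, I would run the parallel finite-difference scheme on $B(v)$ (as sketched in the final paragraph of the proof of Theorem \ref{th1}) and combine both surjectivities $I \pm \widetilde\alpha \in {\rm Aut}(Y)$ to pin the four $r_\iota$ to the Fourier coefficients of a bona fide element of ${\rm M}^1(G)$. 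Bochner then gives $g = \hat\omega$, $\omega \in {\rm M}^1(G)$, so $\nu = \gamma * \omega$; a Lemma \ref{lem5}-style symmetrization, using $I+\alpha \in {\rm Aut}(\mathbb{T}^2)$, transfers the factorization to $\mu_j = \gamma_j * \omega_j$ with $\gamma_j \in \Gamma(\mathbb{T}^2)$ and $\omega_j \in {\rm M}^1(G)$.

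For the sharpening that the $\gamma_j$ live on cosets of a common dense one-parameter subgroup, I would substitute $\hat\gamma_j(m,n) = \exp(-\langle M_j(m,n),(m,n)\rangle)$ into the restriction of equation $(\ref{6})$ to $Y^{(2)}$ (where the $\omega_j$-factors are $1$) and equate the $u$-$v$ bilinear cross-quadratic terms, which must vanish:
\begin{equation*}
(I+\widetilde\alpha)^{T} M_1 + \widetilde\alpha^{T} M_2(I+\widetilde\alpha) = 0.
\end{equation*}
An elementary calculation with $2 \times 2$ integer matrices shows $(\ref{fe18})$ forces $\det\alpha = -1$ and $\operatorname{tr}\alpha = \pm 1$, so $\alpha$ has irrational eigenvalues $(\pm 1 \pm \sqrt{5})/2$. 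Working in the eigenbasis of $\widetilde\alpha$, the identity above together with the symmetry and positive semidefiniteness of $M_1, M_2$ forces both matrices to have rank at most $1$ with kernels along the same eigendirection of $\widetilde\alpha$. Because that direction has irrational slope, it descends to a dense one-parameter subgroup of $\mathbb{T}^2$ on whose cosets both $\gamma_j$ are concentrated.
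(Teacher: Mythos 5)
First, a point of reference: the paper does not prove Theorem B at all --- it is quoted from \cite{Fe4} (see also \cite[Theorem 16.8]{Fe5}), and the paper remarks separately that the factorization part follows from \cite[Theorem 1]{Fe20bb}. So there is no in-paper proof to compare against, and your proposal has to be judged on its own terms.

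Judged that way, it has a genuine gap at exactly the step you yourself flag as ``the main obstacle'': proving that $g(y)=e^{-r_\iota}$ is positive definite, i.e.\ that the three constants $e^{-r_\iota}$ on the nonzero cosets of $Y^{(2)}$ in $\mathbb{Z}^2$ satisfy the four sign-alternating inequalities making $g$ the characteristic function of a probability measure on $G\cong(\mathbb{Z}/2\mathbb{Z})^2$. Your resolution --- ``run the parallel scheme on $B(v)$ and combine both surjectivities $I\pm\widetilde\alpha\in{\rm Aut}(Y)$ to pin the four $r_\iota$'' --- is a statement of intent, not an argument; observing that surjectivity of $I-\widetilde\alpha$ kills the particular subgroup $K=A(X,(I-\widetilde\alpha)(Y))$ used in Theorem \ref{th2} rules out one family of counterexamples but proves nothing. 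The reason this step is genuinely hard for $\mathbb{T}^2$ is that the mechanism used in the proof of Theorem \ref{th1} (Lemmas \ref{lem3}--\ref{lem5}: the Gaussian part lives on $p(E)$ with $p(E)\cap F=\{0\}$, so a signed measure on $F$ convolved with it must be a measure) breaks down when the Gaussian can be absolutely continuous or supported on a closed circle subgroup meeting $G$ nontrivially. Ironically, the ingredients for an actual repair are sitting in your final paragraph: the cross-term identity forces $\det M_1=\det\alpha\cdot\det M_2=-\det M_2$, hence $\det M_1=\det M_2=0$, and then the rank-one directions are a common eigenvector of $\alpha$ with irrational slope (your determinant and eigenvalue computations here are correct, since $(\ref{fe18})$ does force $\det\alpha=-1$, ${\rm tr}\,\alpha=\pm1$). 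If you establish this \emph{before} the positive-definiteness step, the Gaussian component is concentrated on a dense one-parameter subgroup $p(\mathbb{R})$ with $p(\mathbb{R})\cap G=\{0\}$, and the argument of Theorem \ref{th1} (convolution of a measure on $p(\mathbb{R})$ with a signed measure on a finite $2$-torsion group) then yields that $\varpi$ is a measure. As written, however, the logical order is inverted --- you invoke positive definiteness first and derive the rank-one structure only afterwards as a ``sharpening'' --- so the central claim of the theorem is asserted rather than proved.
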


In conclusion, taking into account Theorems \ref{th1} and \ref{th2}, 
 we formulate the following conjecture.

\begin{conjecture}  Let $X$ be a second
countable locally compact Abelian group, let $G$ be the subgroup of $X$
generated by all elements of $X$ of order $2$, and let $\alpha$ be a topological automorphism of the group    $X$ satisfying   condition $(\ref{d1})$. Let
  $\xi_1$ and  $\xi_2$ be independent random variables with values in  
       $X$  and distributions $\mu_1$ and $\mu_2$ with nonvanishing characteristic functions.
   The symmetry of  the conditional
distribution of the linear form  $L_2 = \xi_1 + \alpha\xi_2$ given $L_1 = \xi_1 +
\xi_2$ implies that $\mu_j\in \Gamma(X)*{\rm M^1}(G)$, $j=1, 2$, 
 if and only if the group $X$ contains no  subgroups topologically
isomorphic to the $2$-dimensional torus   $\mathbb{T}^2$.
\end{conjecture}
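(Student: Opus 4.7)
The direction $(\Leftarrow)$ is precisely Theorem~\ref{th1}. For $(\Rightarrow)$ I would show that whenever $X$ contains a closed subgroup topologically isomorphic to $\mathbb{T}^2$, the counterexample from Theorem~\ref{th2} can be lifted to $X$. The starting observation is structural: $\mathbb{T}^2$ is compact, connected and divisible, hence injective in the category of locally compact Abelian groups, so any closed embedding $\mathbb{T}^2\hookrightarrow X$ splits, giving a topological decomposition $X\cong \mathbb{T}^2\times H$ for some closed second countable subgroup $H$.

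Given this, let $\alpha_0\in{\rm Aut}(\mathbb{T}^2)$ and $\mu_1^0,\mu_2^0$ be the data produced in the proof of Theorem~\ref{th2}; recall $I+\alpha_0\in{\rm Aut}(\mathbb{T}^2)$. Assuming we can find $\beta\in{\rm Aut}(H)$ with ${\rm Ker}(I+\beta)=\{0\}$, set $\alpha=\alpha_0\times\beta$ and $\mu_j=\mu_j^0\otimes E_0$, where $E_0$ is the point mass at the identity of $H$. Then ${\rm Ker}(I+\alpha)=\{0\}$ and $\hat\mu_j(y_1,y_2)=\hat\mu_j^0(y_1)$ does not vanish. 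Both linear forms $L_1,L_2$ live inside $\mathbb{T}^2\times\{0\}$, on which $\alpha$ acts as $\alpha_0$, so the conditional symmetry of $L_2$ given $L_1$ follows immediately from Theorem~\ref{th2}. To rule out $\mu_j\in\Gamma(X)*{\rm M^1}(G)$, I would assume a factorization $\mu_j=\gamma_j*\omega_j$ and project onto $H$: this writes $E_0$ as the convolution of a Gaussian on $H$ and a probability measure supported on the $2$-torsion subgroup of $H$; an elementary characteristic-function argument (using $|\hat{\gamma_j^H}|\le 1$ and $|\hat{\omega_j^H}|\le 1$, with equality of the product to $1$ everywhere) forces both projections to be Dirac masses at opposite $2$-torsion points. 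Hence $\gamma_j$ and $\omega_j$ are concentrated on a single $\mathbb{T}^2$-slice $\mathbb{T}^2\times\{h_0\}$, and reading $\mu_j^0$ off that slice yields $\mu_j^0\in\Gamma(\mathbb{T}^2)*{\rm M^1}(G\cap\mathbb{T}^2)$, contradicting Theorem~\ref{th2}.

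The main obstacle is the existence of $\beta$. If $H$ has no elements of order $2$ the identity $\beta=I_H$ works; for $H=(\mathbb{Z}/2)^n$ with $n\ge 2$ a suitable $\beta$ can be found by an $\mathbb{F}_2$-linear algebra computation. However, for $H=\mathbb{Z}/2$ a direct inspection of ${\rm Aut}(\mathbb{T}^2\times\mathbb{Z}/2)$ (every such automorphism has the block form $(t,h)\mapsto(At+Bh,h)$ with $A\in{\rm Aut}(\mathbb{T}^2)$ and $B\colon\mathbb{Z}/2\to\mathbb{T}^2[2]$) shows that \emph{no} $\alpha$ on $X=\mathbb{T}^2\times\mathbb{Z}/2$ satisfies condition $(\ref{d1})$ at all, so the hypothesis of the conjecture is vacuous on this particular $X$. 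A complete proof of the conjecture as stated will therefore need either a non-split block-triangular $\alpha\in{\rm Aut}(X)$ that mixes the two factors (with an adapted construction of the $\mu_j$), or an implicit restriction to those $X$ on which $(\ref{d1})$ admits at least one solution --- navigating these exceptional small-$H$ cases is, to my mind, the real substance of the conjecture rather than the lifting construction itself.
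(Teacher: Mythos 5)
The first thing to be clear about is that the paper does not prove this statement: it is posed explicitly as a Conjecture, and the author records only that the sufficiency direction follows from Theorem~\ref{th1}, while Theorem~\ref{th2} settles the single instance $X=\mathbb{T}^2$ of the necessity direction. So there is no proof in the paper to compare yours against, and your proposal has to be judged as an attack on an open problem. Your reduction of $(\Leftarrow)$ to Theorem~\ref{th1} is exactly what the author intends. For $(\Rightarrow)$ your lifting strategy is the natural one, and the splitting $X\cong\mathbb{T}^2\times H$ does hold for second countable $X$ --- but not for the reason you give. ``Compact, connected and divisible'' does not imply injective in the category of locally compact Abelian groups: an ${\bm a}$-adic solenoid has all three properties, yet its character group is a non-free rank-one torsion-free group, so closed solenoidal subgroups need not split off. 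What saves $\mathbb{T}^2$ is that its character group $\mathbb{Z}^2$ is discrete and free, so the quotient map $Y\to Y/A(Y,K)\cong\mathbb{Z}^2$ admits a continuous homomorphic section and the open mapping theorem (available since $Y$ is $\sigma$-compact) turns the resulting algebraic decomposition into a topological one; dualizing gives the splitting.

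The genuine gap is the one you name yourself, and it is not a removable technicality. Your computation for $X=\mathbb{T}^2\times\mathbb{Z}/2$ is correct: every $\alpha\in{\rm Aut}(X)$ there acts as $(t,h)\mapsto(At+Bh,h)$ with $A\in{\rm Aut}(\mathbb{T}^2)$, so $(I+\alpha)(t,h)=((I+A)t+Bh,0)$; since $I+A$ is either non-injective on $\mathbb{T}^2$ (when $\det(I+A)=0$) or surjective onto $\mathbb{T}^2$ (when $\det(I+A)\ne 0$, in which case some $(t_0,1)$ lies in the kernel), condition~(\ref{d1}) has no solutions on this group. Consequently the hypothesis of the conjecture is vacuous for this $X$, no counterexample can be exhibited, and under the natural universal reading the ``only if'' direction actually \emph{fails} for $X=\mathbb{T}^2\times\mathbb{Z}/2$ even though $X$ contains $\mathbb{T}^2$. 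This means the conjecture as literally stated either needs to be restricted to groups on which (\ref{d1}) admits at least one solution, or the necessity direction must be reformulated; your observation is therefore a substantive contribution to understanding why the statement remains a conjecture rather than a proof of it. The remainder of your skeleton (transporting the Theorem~\ref{th2} data along the splitting and excluding $\mu_j\in\Gamma(X)*{\rm M}^1(G)$ by projecting the factorization to $H$ and forcing the $H$-marginals to be degenerate) is plausible modulo routine detail, but it only covers those $H$ for which a $\beta$ with ${\rm Ker}(I+\beta)=\{0\}$ exists, so the proposal does not close the problem --- nor, given that the paper leaves it open, should it be expected to.
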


The sufficiency in this assertion follows from  Theorem \ref{th1}.

\medskip
 
\textbf{Acknowledgements}
This article was written during my stay at 
the Department of Mathematics University of Toronto as a Visiting Professor. 
I am very grateful to Ilia Binder for his invitation and support.
I would also like to thank the reviewer  for carefully reading the article
and useful comments.

\noindent 
B. Verkin Institute for Low Temperature Physics and Engineering\\
of the National Academy of Sciences of Ukraine\\
Nauky Ave. 47,
61103 Kharkiv, Ukraine

\bigskip

\noindent Department of Mathematics  
University of Toronto \\
40 St. George Street
Toronto, ON,  M5S 2E4
Canada 

\medskip

\noindent e-mail:    gennadiy\_f@yahoo.co.uk

\end{document}